\DeclareMathOperator{\dis}{dis}
\DeclareMathOperator{\vol}{\lambda^d}
\DeclareMathOperator{\fs}{ref}
\theoremstyle{plain}
\newtheorem{theorem}{Theorem}[section]
\newtheorem{proposition}[theorem]{Proposition}
\newtheorem{lemma}[theorem]{Lemma}
\theoremstyle{definition}
\newcommand{\setmid}{\,|\,}
\newcommand{\N}{{\mathbb{N}}}
\newcommand{\R}{{\mathbb{R}}}
\newcommand{\dcubes}{{\mathcal{C}_d}}
\newcommand{\dboxes}{{\mathcal{R}_d}}
\newcommand{\Nb}{N_{[\,\,]}}
\title{Construction of Minimal Bracketing Covers for Rectangles}
\author{$\quad$Michael Gnewuch 
\\$\quad$\\ 
{\small Department of Computer Science,
  Kiel University}\\
{\small Christian-Albrechts-Platz 4, 24098 Kiel, Germany}\\
{\small email: mig@informatik.uni-kiel.de}}
\date{\dateline{Sept 5, 2007}{Jul 16, 2008}{Jul 21, 2008}\\
\small Mathematics Subject Classifications: 05B40, 11K38, 52C45}
\begin{document}

\maketitle

\begin{abstract}
We construct explicit 
$\delta$-bracketing covers with minimal cardinality for the set 
system of (anchored) rectangles in the two dimensional unit cube. 
More precisely, the cardinality of these 
$\delta$-bracketing covers are bounded from
above by $\delta^{-2} + o(\delta^{-2})$. A lower bound for the 
cardinality of arbitrary $\delta$-bracketing covers for $d$-dimensional
anchored boxes from [M. Gnewuch, Bracketing numbers for axis-parallel boxes and applications to
geometric discrepancy, J.~Complexity 24 (2008) 154-172] implies the lower bound
$\delta^{-2}+O(\delta^{-1})$ in dimension $d=2$, showing that
our constructed covers are (essentially) optimal.

We study also other $\delta$-bracketing covers for the set system of
rectangles, deduce the coefficient of the most significant term 
$\delta^{-2}$ in the asymptotic expansion of their cardinality, 
and compute their cardinality for explicit 
values of $\delta$.
\end{abstract}


\section{Introduction}
\label{INT}

Entropy numbers are measures of the size of a given class $\mathcal{F}$
of functions or sets  and they are frequently used in
fields like density estimation, empirical processes 
or machine learning.
Good bounds for these entropy numbers, in particular the covering
or the bracketing numbers, can, e.g., be used to prove bounds
on the expectations of suprema of empirical processes (as, e.g.,
Dudley's metric entropy bound), concentration of measure
results for these suprema, or to verify that a class $\mathcal{F}$
of functions
or sets is a Glivenko-Cantelli or Donsker Class, i.e., that the 
corresponding $\mathcal{F}$-indexed empirical process $\mathbb{G}_n$
exhibits a certain convergence behavior as $n$ tends to infinity
(cf. \cite{DeL, Tal, vdVW}).  

They are also useful in geometric discrepancy theory, i.e.,
in the theory of uniform distribution. (Different facets of this 
theory are nicely described in the monographs 
\cite{BeC, Cha, DrT, Mat, Nie}.)
In geometric discrepancy theory
one tries to distribute $n$ points in a way to minimize the 
``discrepancy'' between a given (probability) measure and the
measure induced by the points (each point has mass $1/n$)
with respect to some class of measurable sets $\mathcal{C}$.
If one takes, e.g., the class 
$\mathcal{C}_d := \{ \prod^d_{i=1} [0,x_i) \,|\, 
x_1,\ldots,x_d \in [0,1]\}$ of anchored $d$-dimensional axis-parallel
boxes, the Lebesgue measure $\lambda^d$ on
$[0,1]^d$, and an $n$-point set $P\subset [0,1]^d$, then the
so-called star discrepancy of $P$
\begin{equation*}
d^*_{\infty}(P) := \sup_{C\in\mathcal{C}_d}
\left| \lambda^d(C) - \frac{1}{n} |P\cap C| \right|
\end{equation*}
is a measure of how uniform the points of $P$ are distributed
in $[0,1]^d$; here $|P\cap C|$ denotes the cardinality of the set $P\cap S$. 
If one substitutes the set system $\mathcal{C}_d$ 
by, e.g., the system of all $d$-dimensional axis-parallel boxes
$\mathcal{R}_d := \{ \prod^d_{i=1} [x_i, y_i) \,|\, 
x_1,y_1,\ldots,x_d, y_d \in [0,1]\}$, one gets another measure 
of uniformity, the so-called extreme discrepancy
\begin{equation*}
d^{\rm e}_{\infty}(P) := \sup_{C\in\mathcal{R}_d}
\left| \lambda^d(C) - \frac{1}{n} |P\cap C| \right|.
\end{equation*} 
Certain types of discrepancy are intimately related to 
multivariate numerical
integration of certain function classes (see, e.g., \cite{ 
Cha, DrT, HSW, Mat, Nie, NoW}); a well-known result in this direction
is the Koksma-Hlawka inequality which, written as an equality, reads
\begin{equation*}
\sup_{f\in B} \left| \int_{[0,1]^d} f(x)\,{\rm d}x
- \frac{1}{n} \sum^n_{i=1} f(t_i) \right|
= d^*_{\infty}(t_1,\ldots,t_n),
\end{equation*}
where $B$ is the unit ball in some particular Sobolev space
of functions (see, e.g., \cite{HSW}).

Thus for multivariate numerical integration it is desirable to be 
able to calculate the
star discrepancy of a given point configuration $\{t_1,\ldots,t_n\}$,
to have (useful) bounds on the smallest possible discrepancy of 
any $n$-point set, and to be able to construct 
sets satisfying such bounds.

Algorithms approximating the star discrepancy of a given $n$-point
set up to some admissible error $\delta$ with the help of bracketing covers have been 
provided in \cite{Thi0,Thi} (see also the discussion in \cite{Bra}).
The more efficient algorithm from \cite{Thi} generates $\delta$-bracketing covers 
of $\mathcal{C}_d$ (for a rigorous definition
see Sect.~\ref{PRE}) and uses those to test the 
discrepancy of a given point set.
The last step raises the task of orthogonal range counting. Depending whether
the orthogonal range counting is done in a naive way or (in small dimensions)
by employing data structures based on range trees, the running time of the 
algorithm is of order
\begin{equation*}
O(dn|\mathcal{B}_\delta|) \hspace{3ex}\text{or}\hspace{3ex}
O\left( (d+(\log n)^d) |\mathcal{B}_\delta| + C^d n (\log n)^d \right),
\end{equation*}
where $\mathcal{B}_\delta$ is the generated $\delta$-bracketing cover and 
$C>1$ some constant. The cost of generating the $\delta$-bracketing cover
$\mathcal{B}_\delta$ is obviously a lower bound for the running time of the 
algorithm and is of order $\Omega(d|\mathcal{B}_\delta|)$.
Thus the running time of the algorithm from \cite{Thi} depends linear on the size of the
generated bracketing covers.

Bounds on the smallest possible star discrepancy with essentially
optimal asymptotic behavior for fixed dimension $d$ have been
known for a long time (see, e.g, \cite{BeC, Cha, DrT, Mat, Nie}). 
Nevertheless, they are nearly useless
for high-dimensional numerical integration, because one needs 
exponentially many sample points in $d$ to reach the asymptotic 
range. Starting with the paper \cite{HNWW} probabilistic approaches
have been used to prove bounds for the star, the extreme, and other types
of discrepancy that are useful for samples of 
moderate size \cite{Dic, DoG, DGKP,DGS, AvLpEx, Bra, HSW, Mha}.
In particular, these investigations focused on the explicit dependence on the
number of points $n$ \emph{and} on the dimension $d$.
(Of course, probabilistic approaches had been used in discrepancy
theory before \cite{HNWW}, see, e.g., \cite{Bec, BeC}.
But these studies had not explored the explicit dependence
on the dimension $d$.)

Let us describe these results in more detail:
We denote the smallest possible star discrepancy of any $n$-point
configuration in $[0,1]^d$ by 
\begin{equation*}
d^*_\infty(n,d) = 
\inf_{P\subset [0,1]^d; |P|=n} d^*_\infty(P)
\end{equation*}
and the so-called inverse of the star discrepancy by 
\begin{equation*}
n^*_\infty(\varepsilon, d) =
\min\{n\in\N \setmid d^*_\infty(n,d) \leq \varepsilon\}\,.
\end{equation*}

In \cite{HNWW} Heinrich, Novak, Wasilkowski, and Wo\'zniakowski
proved the bounds
\begin{equation}
\label{hnww}
d^*_\infty(n,d) \leq C\sqrt{\frac{d}{n}}
\hspace{2ex}\text{and}\hspace{2ex}
n^*_\infty(\varepsilon, d) \leq \lceil C^2 d \varepsilon^{-2}\rceil\,,
\end{equation}
where $C$ is a universal constant. 
The proof uses a theorem
of Talagrand on empirical processes \cite[Thm. 6.6]{Tal}
combined with an upper bound of Haussler on 
so-called covering numbers of Vapnik-\v{C}ervonenkis classes
\cite{Hau}. (Since the theorem of Talagrand holds not only
under a condition on the covering number of the set system
$\mathcal{S}$ under consideration, but also under
the alternative condition that the $\delta$-bracketing
number of $\mathcal{S}$ is bounded 
from above by $(C \delta^{-1})^d$, $C$ some 
constant \cite[Thm. 1.1]{Tal}, one can reprove (\ref{hnww}) by 
using the bracketing result \cite[Thm.~1.15]{Bra}
instead of the result of Haussler.)

An advantage of (\ref{hnww}) is that the dependence of the inverse
of the discrepancy on $d$ is optimal. This was verified in \cite{HNWW}
by a lower bound for the inverse, which was improved by Hinrichs 
\cite{Hin} to $n^*_\infty(d,\varepsilon) \geq c_0 d \varepsilon^{-1}$.
A disadvantage of (\ref{hnww}) is that so far no good 
estimate for the constant $C$ has been published.

An alternative approach via using bracketing covers and large deviation
inequalities of Chernov-Hoeffding type leads to slightly worse bounds
with explicitly given small constants \cite{Dic, DoG, DGKP, DGS, Bra, HNWW}. 
Let $N_{[\,]}(\mathcal{C}_d, \delta)$
denote the bracketing number, i.e., the cardinality of a minimal 
$\delta$-bracketing cover of $\mathcal{C}_d$. Then
\begin{equation}
\label{nstar}
n^*_\infty(\varepsilon, d) \le
\left\lceil \frac{2}{\varepsilon^2} \left(\ln N_{[\,]}(\mathcal{C}_d, \varepsilon/2)
+ \ln 2 \right) \right\rceil,
\end{equation}
see \cite[Proof of Thm.~3.2]{DGS}. Thus improved bounds of the bracketing
entropy  $\ln N_{[\,]}(\mathcal{C}_d, \delta)$ would lead directly to improved bounds on
the inverse of the star discrepancy and of the star disprepancy as well (although its dependence
on the entropy cannot be expressed by an explicit formula like (\ref{nstar}), since the 
corresponding parameter $\delta$ should be chosen to be of the order of the star 
discrepancy; see again \cite[Proof of Thm.~3.2]{DGS}). 
 
Attempts have been made to provide deterministic 
algorithms constructing point sets whose star discrepancy
satisfies the probabilistic bounds resulting from this alternative approach
\cite{DoG, DGKP, DGS}. The running times of the algorithms depend
on the cardinality of suitable $\delta$-bracketing covers;
smaller covers would reduce the running times.

These examples show that for discrepancy theory and its application
to multivariate numerical integration it is of interest
to be able to construct minimal bracketing covers.

In \cite[Thm. 2.7]{DGS} we derived for fixed dimension $d$ 
the upper bound
\begin{equation}
N_{[\,]}(\mathcal{C}_d, \delta) \le \frac{d^d}{d!} \delta^{-d} 
+ O(\delta^{-d+1})
\end{equation}
for the bracketing number of the set system $\mathcal{C}_d$. 
In \cite{Bra} the bounds
\begin{equation}
\label{lobo}
\delta^{-d}(1-c_d \delta) \le N_{[\,]}(\mathcal{C}_d, \delta) 
\le 2^{d-1}\frac{d^d}{d!} (\delta+1)^{-d}, 
\end{equation}
where $c_d$ depends only on the dimension $d$, where proved.
Obviously there is a gap between the upper bounds and the lower
bound. In this paper we prove that in dimension $d=2$ the lower
bound is sharp. More precisely, we construct explicit 
$\delta$-bracketing covers $\mathcal{R}_\delta$ 
whose cardinality is bounded from
above by $\delta^{-2} + o(\delta^{-2})$; thus $1$ is the correct
coefficient in front of the most significant term in the 
expansion of the bracketing number 
$N_{[\,]}(\mathcal{C}_d, \delta)$ with respect to $\delta^{-1}$.
Furthermore, we discuss other constructions in dimension $d=2$
(e.g., the cover from \cite{Thi}) 
and compare them.
We conjecture that the lower bound in (\ref{lobo}) is sharp in 
the sense that $N_{[\,]}(\mathcal{C}_d, \delta) = \delta^{-d} 
+ o_d(\delta^{-d})$ holds for all $d$; here $o_d$ should emphasize 
that the implicit constants in the $o$-notation may depend on $d$.
We are convinced that this upper bound can be proved constructively 
by extending the ideas we used to generate $\mathcal{R}_\delta$
to higher dimensions.


\section{Preliminaries}
\label{PRE}

Let $d\in \N$ and put $[d]:=\{1,\ldots,d\}$.
For $x, y \in [0,1]^d$
we write $x \le y$ if $x_i \le y_i$ holds for all $i \in [d]$. We write $[x,y] :=
\prod_{i \in [d]} [x_i, y_i]$ and use corresponding notation for open and
half-open intervals. We put $V_x :=
\vol([0,x])$ and $V_{x,y} := \vol([x,y])$, where $\vol$ is the 
$d$-dimensional Lebesgue measure. Similarly, we put $V_A := 
\vol(A)$ for any measurable subsets $A$ of $[0,1]^d$.
In this paper we consider the classes 
$$\dcubes = \{[0,x) \setmid x\in [0,1]^d\}\hspace{2ex}\text{and} 
\hspace{2ex}\dboxes = \{[x,y) \setmid x,y\in [0,1]^d\}$$ 
of subsets of $[0,1]^d$. The elements of $\dcubes$ are called
\emph{anchored
(axis-parallel) boxes} or simply \emph{corners}. 
The elements of $\dboxes$ are called \emph{unanchored
(axis-parallel) boxes}. (Here the word ``unanchored''
is of course meant in the sense of ``not necessarily anchored''.)

Let $\mathcal{F}\in\{\dcubes, \dboxes\}$. For a given 
$\delta \in (0,1]$ and $A,B \in \mathcal{F}$ with $A\subseteq B$ 
we call the set
\begin{equation*}
[A,B]_{\mathcal{F}} :=
\{ C\in\mathcal{F} \setmid A\subseteq C \subseteq B\}
\end{equation*}
a \emph{$\delta$-bracket} of $\mathcal{F}$ if its 
\emph{weight} $W([A,B])$ defined by
\begin{equation*}
W([A,B]) := V_B - V_A 
\end{equation*}
does not exceed $\delta$.
A \emph{$\delta$-bracketing cover} of $\mathcal{F}$
is a set of $\delta$-brackets whose union is $\mathcal{F}$. 
By $\Nb(\mathcal{F},\delta)$ we denote the \emph{bracketing
  number} of $\mathcal{F}$, i.e., the smallest number of  
$\delta$-brackets whose
union is $\mathcal{F}$. The quantity $\ln \Nb(\mathcal{F},\delta)$
is called the \emph{bracketing entropy} of $\mathcal{F}$.
In \cite{Bra} we showed in particular that
\begin{equation}
\label{triest}
\Nb(\dcubes,\delta) \leq \Nb(\dboxes, \delta)
\le (N(\dcubes,\delta/2))^2. 
\end{equation}
The second inequality was verified by using arbitrary 
$\delta/2$-bracketing covers of $\dcubes$ of cardinality $\Lambda$
to construct 
$\delta$-bracketing covers of $\dboxes$ of cardinality at most
$\Lambda^2$  
(cf. \cite[Lemma 1.18]{Bra}); that is why we can restrict ourselves
to the construction of bracketing covers of $\dcubes$.

Let us identify the boxes $[0,x)$ in $\dcubes$ with their
right upper corners $x\in [0,1]^d$. 
According to this convention, we identify the bracket 
$[[0,x), [0,y)]_{\dcubes}$ with the $d$-dimensional box
$[x,y]$. 

If we are interested in $\delta$-bracketing covers of $\dcubes$
with small 
cardinality it is clear that we should try to maximize
the volume of the $\delta$-brackets used. 
The following lemma states how $\delta$-brackets of $\dcubes$
with maximum volume look like.

\begin{lemma}
\label{OptBox}
Let $d\ge 2$, $\delta\in(0,1)$, and let $z\in [0,1]^d$ with
$V_z > \delta$. Put 
\begin{equation*}
x=x(z,\delta) := \left( 1 - \frac{\delta}{V_z}
\right)^{1/d}z.
\end{equation*}
Then $[x,z]$ is the uniquely determined $\delta$-bracket  having
maximum volume of all $\delta$-brackets of 
$\dcubes$ that contain $z$.
Its volume is 
\begin{equation*}
V_{x,z} = \left( 1 - \left( 1-\frac{\delta}{V_z}
\right)^{1/d} \right)^d V_z.
\end{equation*}
\end{lemma}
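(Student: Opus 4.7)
The plan is to recast the statement as the constrained optimization of maximizing $V_{u,v} = \prod_i (v_i - u_i)$ over pairs $(u,v) \in [0,1]^{2d}$ with $u \leq z \leq v$ and $V_v - V_u \leq \delta$, since by the identification in the paper these pairs are precisely the $\delta$-brackets of $\dcubes$ containing $z$. The goal is to show that the unique maximizer is $(u,v) = (x,z)$ with $x = x(z,\delta)$ as defined.

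I would first observe that the weight constraint must be tight at a maximizer. Otherwise, decreasing any coordinate $u_j > 0$ strictly increases the volume $\prod_i(v_i - u_i)$ while keeping the weight $\leq \delta$, and the case $u \equiv 0$ forces $V_v \leq \delta$, contradicting $V_v \geq V_z > \delta$. Tightness then yields $V_u = V_v - \delta \geq V_z - \delta > 0$, so every $u_j$ is strictly positive.

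Next, I would show $v = z$ by a one-variable reduction. For each $j$, freeze all other coordinates of $u$ and $v$, and set $A = \prod_{i \neq j} v_i$ and $B = \prod_{i \neq j} u_i$; then $B < A$ since $u_i < v_i$ for all $i$ at any maximizer. The tight weight equation $v_j A - u_j B = \delta$ gives $u_j = (v_j A - \delta)/B$, and hence
\[
v_j - u_j = \frac{(B-A)v_j + \delta}{B},
\]
which is strictly decreasing in $v_j$ since $B - A < 0$. Consequently $V_{u,v}$ is strictly decreasing in $v_j$, and the one-variable optimum is at the smallest admissible value $v_j = z_j$. Feasibility of $0 < u_j \leq z_j$ after this reduction follows from $z_j A \geq V_z > \delta$ together with the bound $\delta \geq z_j(A-B)$ implied by the original weight equation. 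Iterating over $j$ yields $v = z$.

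With $v = z$ and $V_u = V_z - \delta$ fixed, I substitute $u_i = t_i z_i$ to rewrite the problem as maximizing $\prod_i (1 - t_i)$ subject to $\prod_i t_i = 1 - \delta/V_z$. The function $s \mapsto \log(1 - e^s)$ has second derivative $-e^s/(1-e^s)^2 < 0$ on $(-\infty, 0)$, so Jensen's inequality applied to $s_i = \log t_i$ shows that $\sum_i \log(1 - t_i)$ is uniquely maximized, at fixed $\sum_i \log t_i$, when all $t_i$ coincide. This forces $t_i = (1 - \delta/V_z)^{1/d}$, giving the claimed formula for $x(z,\delta)$ and $V_{x,z} = V_z(1 - (1-\delta/V_z)^{1/d})^d$, and uniqueness at each step yields uniqueness of the extremal bracket. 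The only delicate point is tracking the feasibility constraints $0 < u_j \leq z_j$ throughout the reduction $v \to z$, which all follow from the hypothesis $V_z > \delta$.
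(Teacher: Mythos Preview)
Your argument is correct. The paper itself does not supply a proof of this lemma; it simply cites \cite[Lemma~1.1]{Bra}. So there is nothing to compare against here, and your self-contained proof stands on its own.

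A few remarks on your write-up. First, the existence of a maximizer is tacitly used but not stated: the feasible set $\{(u,v)\in[0,1]^{2d}: u\le z\le v,\ V_v-V_u\le\delta\}$ is compact and $V_{u,v}$ is continuous, so a maximizer exists; you should say this before invoking ``at a maximizer.'' Second, the phrase ``iterating over $j$'' is slightly misleading, since your argument is really a first-order optimality check at a fixed maximizer: for each $j$ separately you show that $v_j>z_j$ would allow a strict improvement, hence $v_j=z_j$. No iteration is needed. Third, in step~3 you should note that the constraint surface $\{t\in(0,1)^d:\prod_i t_i=c\}$ is not compact, but this is harmless because you already know a maximizer of the original (compact) problem exists and lies in this open set; the strict concavity then pins it down uniquely.

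The Jensen step via $s\mapsto\log(1-e^s)$ is clean; an alternative is Lagrange multipliers or the AM--GM rearrangement, but your route gives uniqueness for free from strict concavity.
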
 

(In the case where $V_z \le \delta$ it is easy to see that $z$ is 
always contained in some $\delta$-bracket $[0,\zeta)$ 
with maximum volume $V_{\zeta} = \delta$.) 
For a proof of the lemma see \cite[Lemma 1.1]{Bra}.

Now we state a ``scaling lemma'' which we shall use frequently
throughout the paper.

\begin{lemma}
\label{scaling}
Let $\delta \in (0,1)$ and $\lambda = (\lambda_1,\ldots,\lambda_d) 
\in (0,\infty)^d$. 
Let 
\begin{equation*}
\Phi(\lambda): \R^d \to \R^d\,,\, (x_1,\ldots,x_d) \mapsto
(\lambda_1x_1,\ldots,\lambda_dx_d).
\end{equation*} 
Furthermore, let $S \subseteq [0,1]^d$ such that 
$\Phi(\lambda)S\subseteq [0,1]^d$. Then the smallest number of 
$\delta$-brackets whose union covers $S$ is the smallest number of
$((\prod^d_{i=1}\lambda_i)\delta)$-brackets whose union covers 
$\Phi(\lambda)S$.
\end{lemma}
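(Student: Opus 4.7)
The plan is to exploit the fact that $\Phi(\lambda)$ is a bijection of $\R^d$ that maps anchored boxes to anchored boxes, scaling volumes by a uniform factor $\prod_{i=1}^d \lambda_i$. Once this is set up, $\delta$-brackets and $((\prod_i\lambda_i)\delta)$-brackets correspond bijectively, and so do their covers of $S$ and $\Phi(\lambda)S$.

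First I would check the behavior on brackets. Under the identification $[0,x)\leftrightarrow x$, a $\delta$-bracket of $\dcubes$ is a box $[x,y]\subseteq [0,1]^d$ with $x\le y$ and $V_y-V_x\le\delta$. Since every $\lambda_i>0$, we have $\Phi(\lambda)x\le\Phi(\lambda)y$, and the hypothesis $\Phi(\lambda)S\subseteq[0,1]^d$ can be used (together with a standard argument: any bracket containing a point of $S$ may be shrunk so that its upper corner lies in $S$, hence in $[0,1]^d$ after applying $\Phi(\lambda)$) to ensure $\Phi(\lambda)[x,y]\subseteq[0,1]^d$. A direct computation gives
\begin{equation*}
V_{\Phi(\lambda)y}-V_{\Phi(\lambda)x}=\Bigl(\prod_{i=1}^d\lambda_i\Bigr)(V_y-V_x),
\end{equation*}
so $[x,y]$ is a $\delta$-bracket of $\dcubes$ for $S$ if and only if $\Phi(\lambda)[x,y]=[\Phi(\lambda)x,\Phi(\lambda)y]$ is a $((\prod_i\lambda_i)\delta)$-bracket of $\dcubes$ for $\Phi(\lambda)S$.

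Next I would use bijectivity to translate covers. Given a $\delta$-bracketing cover $\{[x^{(j)},y^{(j)}]\}_{j=1}^N$ of $S$, the images $\{\Phi(\lambda)[x^{(j)},y^{(j)}]\}_{j=1}^N$ are $((\prod_i\lambda_i)\delta)$-brackets, and since $\Phi(\lambda)$ is a bijection and each point of $S$ lies in at least one bracket, each point of $\Phi(\lambda)S$ lies in at least one image bracket. Conversely, applying the same construction to $\Phi(\lambda)^{-1}=\Phi(1/\lambda_1,\ldots,1/\lambda_d)$ turns any $((\prod_i\lambda_i)\delta)$-bracketing cover of $\Phi(\lambda)S$ of cardinality $N'$ into a $\delta$-bracketing cover of $S$ of cardinality $N'$. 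Taking minima in both directions yields the claimed equality.

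I do not anticipate a genuine obstacle: the argument is essentially a change of variables, and the only point that needs slight care is that the brackets produced actually lie in $[0,1]^d$ (which is where the hypothesis $\Phi(\lambda)S\subseteq[0,1]^d$ enters). This is handled by restricting attention to minimal brackets that meet $S$, which can always be trimmed so their upper corners lie in $S$.
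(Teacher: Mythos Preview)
Your approach is essentially the same as the paper's, which simply notes that applying $\Phi(\lambda)$ multiplies the weight of any bracket by $\prod_{i=1}^d\lambda_i$ and leaves the rest implicit. Your write-up just makes the bijection between covers explicit.

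One small imprecision in your trimming step: you say a bracket meeting $S$ may be shrunk so that its upper corner lies in $S$. If the bracket contains several incomparable points of $S$, shrinking the upper corner down to one of them can lose the others, so this need not preserve the cover. The clean fix is to intersect each bracket $[x,y]$ with the box $[0,c]$, where $c_i=\min(1,1/\lambda_i)$. Since $\Phi(\lambda)S\subseteq[0,1]^d$ forces $S\subseteq[0,c]$, the truncated bracket $[x\wedge c,\,y\wedge c]$ still covers $[x,y]\cap S$, has weight at most that of $[x,y]$ (the map $z\mapsto V_z-V_{z\wedge c}$ is nondecreasing in $z$), and now satisfies $\Phi(\lambda)(y\wedge c)\in[0,1]^d$. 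With this adjustment your argument goes through.
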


The proof is obvious since scaling a bracket by 
applying $\Phi(\lambda)$ implies that its weight is scaled by 
the multiplicative
factor $\prod^d_{i=1}\lambda_i$.

Let us briefly recapitulate the construction of a 
$\delta$-bracketing cover $\mathcal{G}_\delta$ from 
\cite{DGS} in which the $\delta$-brackets are the cells
in a non-equidistant grid. We do so for two reasons: 
We want to compare the cardinality of $\mathcal{G}_\delta$
with the (more sophisticated) bracketing covers we present later,
and, what is more important, the construction of 
$\mathcal{G}_\delta$ can be viewed as a ``building block'' of all
these bracketing covers.

We construct the non-equidistant grid
\begin{equation}
\label{non_eq_grid}
\Gamma_\delta = 
(\{x_0, x_1, ...,x_{\kappa(\delta,d)}\}\cup \{0\})^d\,,
\end{equation} 
where $x_0, x_1,..., x_{\kappa(\delta,d)}$ is a decreasing sequence in
$(0,1]$. We calculate this sequence recursively in the following way:
Put $x_0 := 1$ and $x_1 := (1-\delta)^{1/d}$. If $x_i > \delta$, 
then define
$x_{i+1} := (x_i - \delta)x^{1-d}_1$.
If $x_{i+1} \leq \delta$, then
put $\kappa(\delta,d) := i+1$, 
otherwise proceed by calculating $x_{i+2}$.

Since $\mathcal{G}_\delta$ consists of the cells of $\Gamma_{\delta}$,
i.e., of all closed $d$-dimensional boxes $B$ whose intersection with 
$\Gamma_{\delta}$ consists exactly of the $2^d$ corners of $B$, we have 
\begin{equation}
\label{cardG}
|\mathcal{G}_\delta| = (\kappa(\delta,d) + 1)^d.
\end{equation}
It was shown in \cite{DGS}, that $\mathcal{G}_\delta$ is a bracketing
cover (without explicitly using this notion) and that 
\begin{equation}
\label{kappa}
\kappa(\delta, d) = \left\lceil
\frac{d}{d-1}\frac{\ln(1-(1-\delta)^{1/d}) -
  \ln(\delta)}{\ln(1-\delta)} \right\rceil\,.
\end{equation}
Furthermore, it was shown that the inequality 
$\kappa(\delta, d) \leq \big\lceil \frac{d}{d-1} \frac{\ln(d)}{\delta} 
\big\rceil$
holds, and that the quotient of the left and the right hand side of
this inequality converges to $1$ as $\delta$ approaches $0$.
But to make proofs shorter in what follows, it is better to use the
more precise estimate
\begin{equation}
\label{kappaO1}
\kappa(\delta,d) = \frac{d}{d-1}\ln(d)\delta^{-1} + O(1)
\hspace{2ex}\text{as $\delta$ approaches zero.}
\end{equation}  
It follows directly from the following identities which are easy to 
check:
\begin{equation}
\label{ln1}
(\ln(1-\delta))^{-1} = -\delta^{-1} + O(1)
\end{equation}
and 
\begin{equation}
\label{ln2}
\ln(1-(1-\delta)^{1/d}) = \ln(\delta)-\ln(d) + O(\delta)
\end{equation}
as $\delta$ tends to zero.

Let us now confine ourselves to dimension $d=2$ and use the 
shorthand $\kappa(\delta)$ for $\kappa(\delta,2)$.
Put $a_i(\delta) := (1-i\delta)^{1/2}$ for 
$i=0,1,...,\lceil \delta^{-1} \rceil -1$.
Then in fact, $\kappa(\delta)+1$ is the minimal number of 
$\delta$-brackets of heights $1-a_1(\delta)$ whose union covers
the stripe $[(0, a_1(\delta)), (1,1)]$; the $\delta$-brackets
covering the stripe are the rectangles $[(x_1, a_1(\delta)), (x_0,1)]$,
$[(x_2, a_1(\delta)), (x_1,1)]$,...,
$[(0, a_1(\delta)), (x_{\kappa(\delta)},1)]$.

Let us more generally define $\omega(\delta,t)$ to be the minimal
number of $\delta$-brackets of heights $1-a_1(\delta)$ whose 
union covers the stripe $[(t, a_1(\delta)), (1,1)]$ for some
$t\in [0,1]$. We calculate again $x_0, x_1,\ldots$
as above and determine $\omega(\delta,t)$ such that
$x_{\omega(\delta,t)-1} > t$ and 
$[(x_1, a_1(\delta)), (x_0,1)]$,
$[(x_2, a_1(\delta)), (x_1,1)]$,...,
$[(t, a_1(\delta)), (x_{\omega(\delta,t)-1},1)]$
are $\delta$-brackets whose union covers the stripe 
$[(t, a_1(\delta)), (1,1)]$.
From the construction of the $x_i$ we see that
$$
x_i = (1-\delta)^{-i/2} - \delta(1-\delta)^{-1/2}\,
\frac{1-(1-\delta)^{-i/2}}{1-(1-\delta)^{-1/2}}
$$
and that $x_{i+1} \le t$ is satisfied if and only if
$$
i+1 \ge
2\,\frac{\ln \big( 1-(1-\delta)^{1/2} \big) - 
\ln \big( t(1-(1-\delta)^{-1/2}) + \delta(1-\delta)^{-1/2} \big)}
{\ln(1-\delta)}.
$$
Thus 
\begin{equation}
\label{omegadeltat}
\omega(\delta,t) = \left\lceil 2\;
\frac{\ln \big( 1-(1-\delta)^{1/2} \big) - 
\ln \big( t(1-(1-\delta)^{-1/2}) + \delta(1-\delta)^{-1/2} \big)}
{\ln(1-\delta)} \right\rceil.
\end{equation}
Observe that for $t=0$ we have indeed 
$\omega(\delta,0) = \kappa(\delta) + 1$. 
We shall use the numbers $\omega(\delta,t)$ for different $\delta$ 
and $t$ to
show that the last bracketing cover we present in this paper
exhibits the (asymptotically) optimal cardinality.

In the following three sections we present $\delta$-bracketing
covers with reasonably smaller cardinality than $\mathcal{G}_\delta$.


\section{The Construction of Thi\'emard}

Before stating the algorithm of Thi\'emard to construct
a $\delta$-bracketing cover $\mathcal{T_\delta}$,
we want to explain its main idea in dimension $d=2$. (In 
\cite{Thi} the algorithm is discussed for arbitrary $d$.)

It covers $[0,1]^2$ successively with $\delta$-brackets
by decomposing all rectangles $P$ with weight $W(P) >\delta$
into smaller rectangles starting with the rectangle $[0,1]^2$.
More precisely, if $P$ is of the form $P=[\alpha,\beta]$ 
for some $\alpha=\alpha^P$, $\beta=\beta^P \in [0,1]^2$, 
then it calculates parameters $\gamma_1 = \gamma_1^P$,
$\gamma_2 = \gamma_2^P$ satisfying 
$\alpha_1 \le \gamma_1 \le \beta_1$ and 
$\alpha_2 \le \gamma_2 \le \beta_2$ and decomposes $P$ into
$$
Q^P_1 = [(\alpha_1, \alpha_2), (\gamma_1,\beta_2)]
\hspace{2ex}\text{and}\hspace{2ex}
P^P_1 = [(\gamma_1, \alpha_2), (\beta_1,\beta_2)].
$$
Afterwards it decomposes $P^P_1$ into 
$$
Q^P_2 = [(\gamma_1, \alpha_2), (\beta_1,\gamma_2)]
\hspace{2ex}\text{and}\hspace{2ex}
P^P_2 = [(\gamma_1, \gamma_2), (\beta_1,\beta_2)],
$$
resulting in the (almost disjoint) decomposition
$$
P = Q^P_1 \cup Q^P_2 \cup P^P_2.
$$
The right choice of $\gamma = (\gamma_1,\gamma_2)$ 
ensures $W(P^P_2) = \delta$ and $P^P_2$ is chosen to become an 
element of the final $\delta$-bracketing cover $\mathcal{T}_\delta$.

The rectangle $Q^P_1$ is of ``type 1'', the rectangle $Q^P_2$
of ``type 2'': if the algorithm decomposes them, then it chooses 
$\gamma_1^{Q^P_1} \in (\alpha_1^P, \gamma_1^P)$ and 
$\gamma_1^{Q^P_2} = \gamma_1^P$ implying that $Q^P_1$ will be 
decomposed into
three, but $Q^P_2$ only into two non-trivial rectangles.

That is why in the algorithm a rectangle $P$ is described by the
triple $(P, i, W(P))$, where $i\in\{1,2\}$ denotes the type of
the rectangle.

Denoted in pseudo-code, the algorithm looks as follows:

\vskip 0.2cm \noindent
{\bf Algorithm} {\sc THIEMARD}
\vskip 0.2cm \noindent
{\it Input:} $\delta \in (0,1)$.
\vskip 0.1cm \noindent
{\it Output:} A $\delta$-bracketing cover $\mathcal{T}_\delta$.
\vskip 0.2cm \noindent
{\it Main}
\vskip 0.2cm

$\mathcal{T}_\delta:=\emptyset$
\vskip 0.2cm

Decompose $([0,1]^2, 1, 1)$
\vskip 0.2cm \noindent
{\it Procedure {\sc decompose}} $(P,j,v)$
\vskip 0.2cm

Compute $\delta^P$ according to (\ref{deltap})
\vskip 0.2cm

Compute $\gamma^P$ according to (\ref{gammap})
\vskip 0.2cm

If $\delta^P v > \delta$
\vskip 0.2cm

\hspace{3ex}For $i$ from $j$ to $2$
\vskip 0.2cm

\hspace{6ex}Decompose $(Q^P_i, i, \delta^P v)$
\vskip 0.2cm

Else 
\vskip 0.2cm

\hspace{3ex}For $i$ from $j$ to $2$
\vskip 0.2cm

\hspace{6ex}$\mathcal{T}_\delta := \mathcal{T}_\delta \cup \{Q^P_i\}$
\vskip 0.2cm

$\mathcal{T}_\delta := 
\mathcal{T}_\delta \cup \{[\gamma^P, \beta^P]\}$ 
\vskip.2cm\noindent
For each triple $(P, j, v)$ we calculate $\delta^P \in (0,1)$ 
and $\gamma^P \in [0,1]^2$ as follows:
\begin{equation}
\label{deltap}
\delta^P = \left( \frac{\beta_1^P\beta_2^P - \delta}
{\beta_1^P\beta_2^P} \right)^{1/2}
\hspace{2ex}\text{if $j=1$,}\hspace{2ex}
\delta^P =  \frac{\beta_1^P\beta_2^P - \delta}
{\alpha_1^P\beta_2^P} 
\hspace{2ex}\text{if $j=2$,}
\end{equation}
and
\begin{equation}
\label{gammap}
\gamma^P_i = 
\begin{cases}
\hspace{1ex}\alpha_i^P
\hspace{2ex}&\text{if $i<j$},\\
\hspace{0ex}\delta^P\beta_i^P
\hspace{2ex} &\text{if $i \ge j$.}\\
\end{cases}
\end{equation}

That the resulting set $\mathcal{T}_\delta$ is indeed a 
$\delta$-bracketing cover was proved in \cite{Thi}.
In Figure \ref{figt025} and \ref{figt005} one can see
the resulting cover $\mathcal{T}_\delta$ for $\delta = 0.25$
and $\delta = 0.05$.

\begin{figure}
\begin{center}
\hspace{1cm}{\epsfig{file=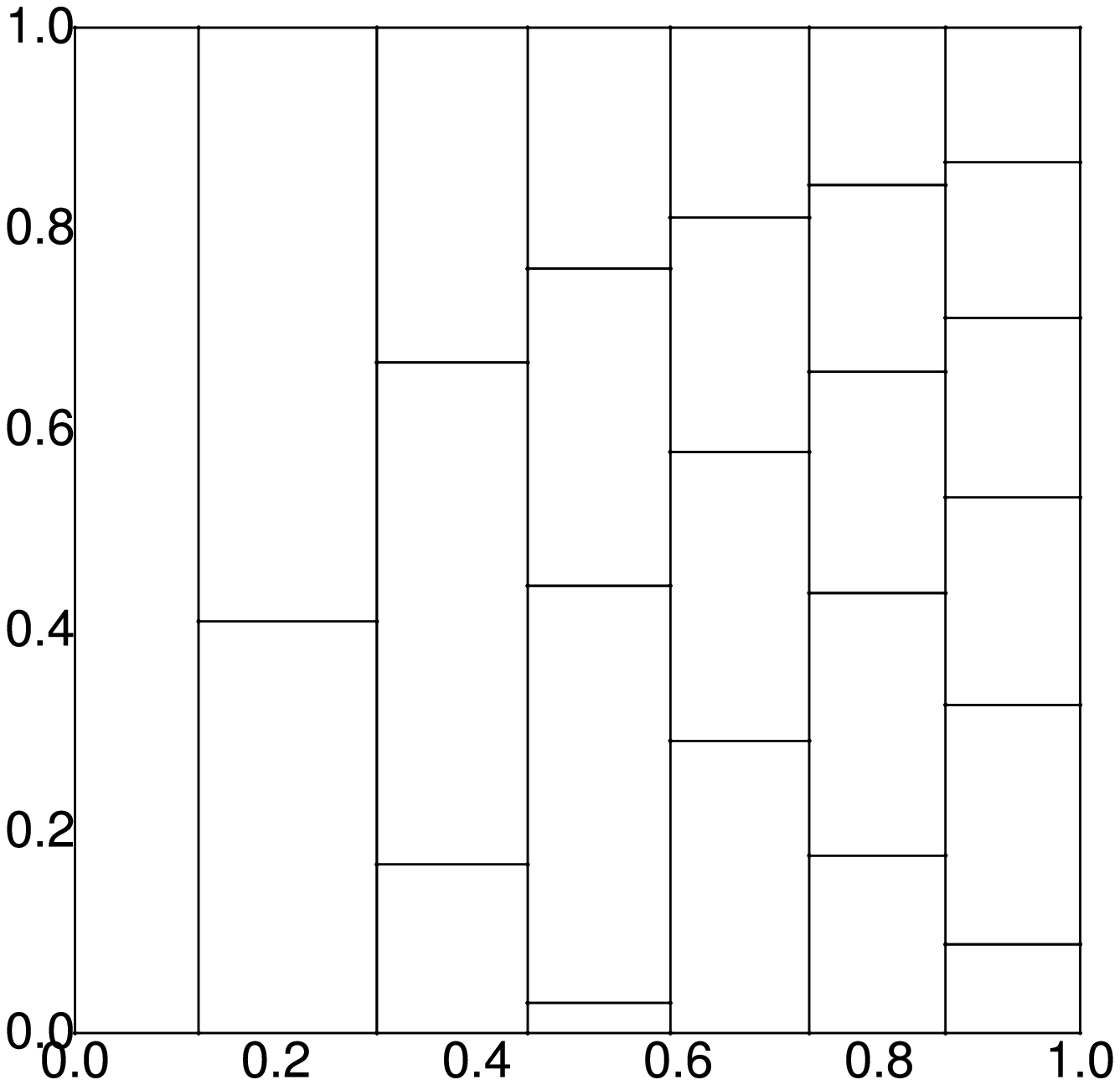,
height=.4\textheight
}}
\caption{\label{figt025}$\mathcal{T}_\delta$ for $\delta = 0.25$.}
\end{center}
\end{figure} 

\begin{figure}
\begin{center}
\hspace{1cm}{\epsfig{file=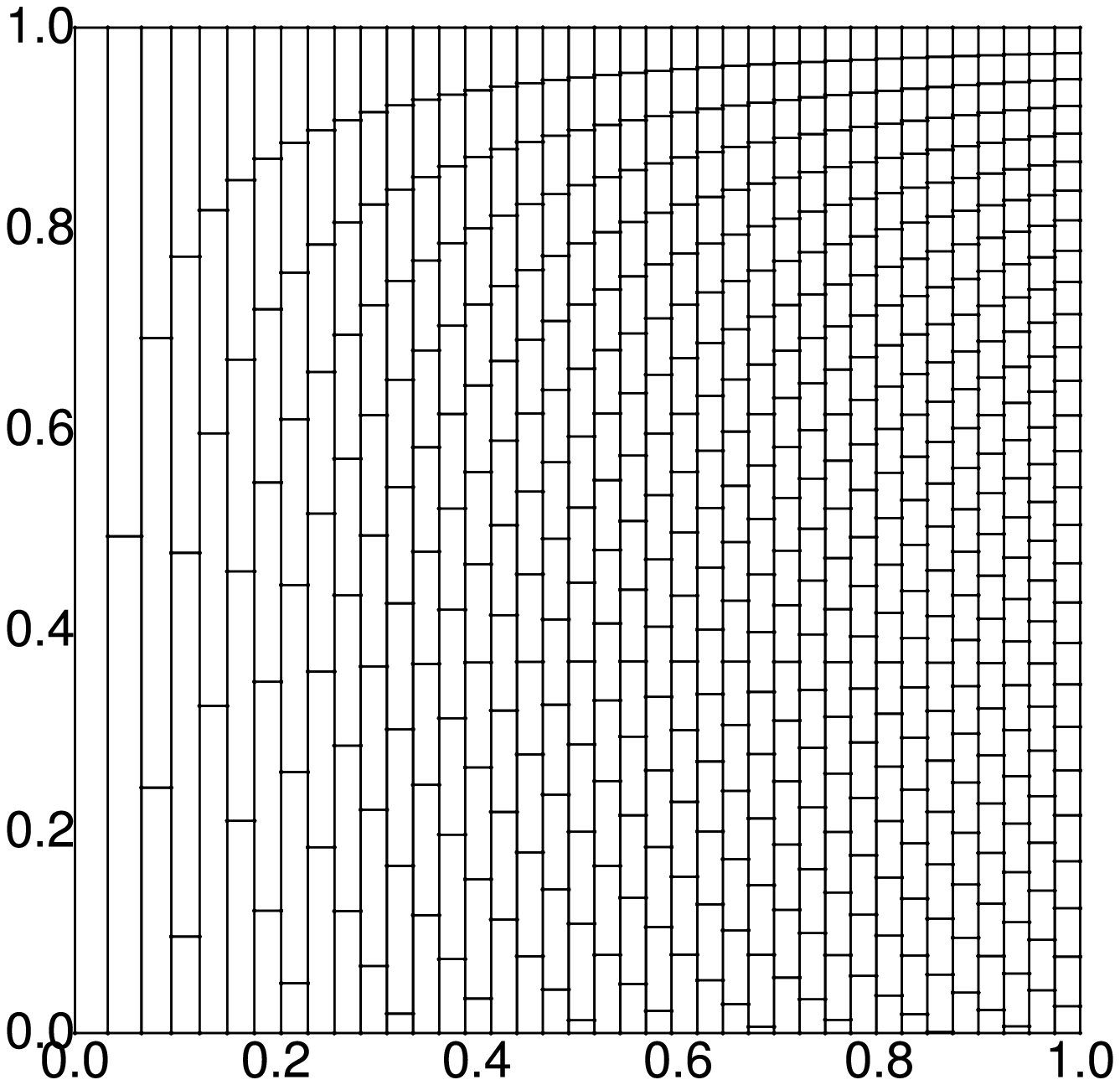,height
=.4\textheight
}}
\caption{\label{figt005}$\mathcal{T}_\delta$ for $\delta = 0.05$.}
\end{center}
\end{figure}  

Let us now determine the asymptotic behavior of 
$|\mathcal{T}_\delta|$ for $\delta$ tending to zero. 
In \cite[Theorem 3.4]{Thi} Thi\'emard proved the bound
\begin{equation*}
|\mathcal{T}_\delta| \le {2+h\choose 2}, \hspace{2ex}
\text{where} \hspace{2ex} 
h= \left\lceil \frac{2\ln(\delta)}{\ln(1-\delta)} \right\rceil.
\end{equation*}
This implies $|\mathcal{T}_\delta| \le 2(\ln(\delta^{-1}))^2\,
\delta^{-2} + o(\delta^{-2})$. We improve this estimate in the following
Proposition by deducing the correct asymptotic behavior in terms of
$\delta^{-1}$ and the
exact coefficient in front of the most significant term $\delta^{-2}$.

\begin{proposition}
For a given $\delta\in (0,1)$ we get
\begin{equation*}
|\mathcal{T}_\delta| = 2\ln(2)\delta^{-2}
+ O(\delta^{-1}).
\end{equation*}
\end{proposition}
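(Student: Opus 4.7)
My plan is to trace the recursive structure of the algorithm explicitly. A straightforward induction on depth shows that the algorithm produces a chain of type-$1$ rectangles $P^{(k)} = [(0,0),(v_k,1)]$, where $v_0 = 1$ and $v_{k+1} = v_k\sqrt{1-\delta/v_k}$. Each $P^{(k)}$ gives rise to a ``final bracket'' $[\gamma^{(k)},\beta^{(k)}]$ and, provided $v_{k+1}>\delta$, spawns a type-$2$ side-chain from $Q_2^{(k)} = [(v_{k+1},0),(v_k,v_{k+1}/v_k)]$ while recursing on $Q_1^{(k)} = P^{(k+1)}$. Let $K$ be the smallest index with $v_{K+1}\le\delta$, so the algorithm recurses at depths $0,\ldots,K-1$ and hits the base case at depth $K$ (contributing two extra brackets $Q_1^{(K)}, Q_2^{(K)}$ on top of the final bracket). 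Consequently
\begin{equation*}
|\mathcal{T}_\delta| \;=\; (K+3) + \sum_{k=0}^{K-1} L_k,
\end{equation*}
where $L_k$ is the number of brackets produced by the $k$-th type-$2$ side-chain.

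To compute $L_k$, observe that along a type-$2$ chain the parameters $\alpha_1 = v_{k+1}$, $\beta_1 = v_k$ stay frozen, and the quantity $u_m := \beta_1\beta_2^{(m)}$ obeys the linear recurrence $u_{m+1} = r_k(u_m-\delta)$ with $r_k = v_k/v_{k+1}$, $u_0 = v_{k+1}$. Its fixed point simplifies via the basic identity $v_k^2 - v_{k+1}^2 = \delta v_k$ to $u^*_k = r_k\delta/(r_k - 1) = v_k + v_{k+1}$, so $u^*_k - u_0 = v_k$. Writing $\rho_k = 1/r_k = v_{k+1}/v_k$ (hence $\delta/v_k = 1 - \rho_k^2$), the stopping condition $u_{m+1}\le\delta$ simplifies to $r_k^{m+1}\ge \rho_k(1+\rho_k)$, i.e.
\begin{equation*}
M_k \;=\; \max\!\left\{0,\; \left\lceil \frac{\ln(1+\rho_k)}{\ln(1/\rho_k)} \right\rceil - 2\right\},
\end{equation*}
and $L_k = M_k + 2$. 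A Taylor expansion in the small parameter $\delta/v_k$ (so $\rho_k = 1 - \delta/(2v_k) + O(\delta^2/v_k^2)$) yields $M_k = 2v_k\ln 2/\delta + O(1)$, and this uniform bound is trivially extended to the handful of tail indices where $v_k = O(\delta)$.

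The summation is then clean. Telescoping the identity $v_k^2 - v_{k+1}^2 = \delta v_k$ gives exactly
\begin{equation*}
\sum_{k=0}^{K-1} v_k \;=\; \frac{v_0^2 - v_K^2}{\delta} \;=\; \frac{1}{\delta} + O(\delta),
\end{equation*}
since $v_K\in(\delta,\phi\delta]$ with $\phi = (1+\sqrt{5})/2$. Combined with the easy bound $K<2/\delta$ (from $v_k - v_{k+1} = v_k\delta/(v_k+v_{k+1}) > \delta/2$), we obtain
\begin{equation*}
\sum_{k=0}^{K-1} L_k \;=\; \frac{2\ln 2}{\delta}\sum_{k=0}^{K-1} v_k + O(K) \;=\; 2\ln 2\cdot\delta^{-2} + O(\delta^{-1}),
\end{equation*}
and adding $K+3 = O(\delta^{-1})$ yields the claim.

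The main obstacle is the uniform estimate $M_k = 2v_k\ln 2/\delta + O(1)$: the expansions $\ln(1/\rho_k) = \delta/(2v_k) + O(\delta^2/v_k^2)$ and $\ln(1+\rho_k) = \ln 2 - \delta/(4v_k) + O(\delta^2/v_k^2)$ break down when $\delta/v_k \to 1$. The remedy is a case split at some threshold $v_k = c\delta$: the expansion applies for $v_k\ge c\delta$, while for $v_k<c\delta$ the chain is trivially short, $M_k = O(1)$, and the bound $v_k - v_{k+1}>\delta/2$ forces at most $O(1)$ indices to lie in this tail, so both the uniform estimate and the overall $O(\delta^{-1})$ error are preserved.
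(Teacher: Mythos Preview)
Your proof is correct and follows essentially the same route as the paper's. Both identify the vertical stripe decomposition indexed by the type-$1$ chain $v_k$ (the paper's $t_k$), estimate the bracket count per stripe as $2\ln 2\cdot v_k/\delta + O(1)$ (you via the explicit linear recurrence for the type-$2$ side-chain, the paper via the scaling lemma reducing to $\kappa(\delta/t_k)$, which yields the same closed form), and finish with the telescoping identity $v_k^2-v_{k+1}^2=\delta v_k$ to obtain $\sum v_k=\delta^{-1}+O(\delta)$ together with the bound $K=O(\delta^{-1})$ on the number of stripes.
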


\begin{proof}
From the discussion above (and also from Figure \ref{figt025} and 
\ref{figt005}) we see that Thi\'emard's algorithm decomposes
the unit rectangle $[0,1]^2$ into stripes 
$$
S^{(i)}_\delta := [(t_{i+1},0), (t_i,1)], \hspace{2ex}
i=0,\ldots, \tau(\delta), 
$$
and these stripes again into $\delta$-brackets; here the numbers
$t_i$ are the $x$-coordinates of the corners of all rectangles
of type $1$ that appear in the course of the algorithm.
More precisely, we have $t_0 = 1$, $t_{\tau(\delta)+1}=0$,
\begin{equation}
\label{tiplus1}
t_{i+1} = \left( 1 - \frac{\delta}{t_i} \right)^{1/2} t_i
= \left( \left( t_i - \frac{\delta}{2} \right)^2 - 
\frac{\delta^2}{4} \right)^{1/2},
\end{equation}
and $\tau(\delta)$ is uniquely determined by the relation
\begin{equation}
\label{unique}
0 < t_{\tau(\delta)} \le \delta.
\end{equation}
We have
\begin{equation}
\label{simest1}
t_i - \delta \le t_{i+1} < t_i - \frac{\delta}{2};
\end{equation}
both inequalities follow easily
from (\ref{tiplus1}).
From (\ref{unique}) and (\ref{simest1}) we get
\begin{equation}
\label{simest2}
\lceil \delta^{-1} \rceil -1 \le \tau(\delta) \le 
\lceil 2 \delta^{-1} \rceil - 1.
\end{equation}
Furthermore, we get by simple induction 
\begin{equation*}
t^2_{i+1} = 1 - \delta \sum^i_{k=0} t_k,
\end{equation*}
which, together with (\ref{unique}), results in
\begin{equation}
\label{keyineq}
\delta^{-1} - \delta \le \sum^{\tau(\delta)-1}_{k=0} t_k
< \delta^{-1}.
\end{equation}
Let us now calculate the number $s^{(i)}_\delta$ of 
$\delta$-brackets of widths $t_i-t_{i+1}$ that cover the stripe 
$S^{(i)}_\delta$.
Since the bracketing problem is symmetric in the $x$- and 
$y$-coordinate, we get from the discussion in the previous 
section
\begin{equation*}
s^{(0)}_\delta = \kappa(\delta) + 1,
\hspace{2ex}\text{where}\hspace{2ex}
\kappa(\delta) = \kappa(\delta,2)
\hspace{2ex}\text{as defined in (\ref{kappa}).}
\end{equation*}
(Note that $t_1 = (1-\delta)^{1/2}$.)
 From this we can derive $s^{(i)}_\delta$ for all 
$i \in \{0,\ldots, \tau(\delta)-1\}$ via ``scaling'':
Lemma \ref{scaling} gives us with the choice 
$\lambda = (t^{-1}_i,1)$
$$
s^{(i)}_\delta = \kappa(\delta/t_i) + 1
\hspace{2ex}\text{for all 
$i \in \{0,\ldots, \tau(\delta)-1\}$.}
$$
(Observe that $t_{i+1}/t_i = a_1(\delta/t_i)$.)
Furthermore, we have trivially $s^{(\tau(\delta))}_\delta = 1$.

Then identity (\ref{kappaO1}) and the inequalities 
(\ref{simest2}) and (\ref{keyineq}) result in
\begin{equation*}
|\mathcal{T}_\delta| = \sum^{\tau(\delta)}_{i=0} s^{(i)}_\delta
= \sum^{\tau(\delta)-1}_{i=0} (\kappa(\delta/ t_i)+1) + 1
= 2\ln(2) \delta^{-1} \sum^{\tau(\delta)-1}_{i=0}  t_i + O(\delta^{-1})
= 2\ln(2)\delta^{-2} + O(\delta^{-1}).
\end{equation*}



\end{proof}


\section{Another Construction}

Let us consider another algorithm constructing $\delta$-bracketing
covers $\mathcal{Z}_\delta$ for anchored rectangles:

Let again $a_i = a_i(\delta) = (1-i\delta)^{1/2}$
for $i=0,\ldots, \zeta(\delta) := \lceil \delta^{-1} \rceil -1$,
and $a_{\zeta(\delta) +1} = 0$. Put $
\overline{a}_i := (a_i(\delta), a_i(\delta))$ for all $i$. 
We first decompose $[0,1]^2$ into layers 
$$
L^{(i)}(\delta) := 
[0,\overline{a}_i]\setminus [0,\overline{a}_{i+1}).
$$
Then, starting with $L^{(0)}(\delta)$, we will cover each layer 
$L^{(i)}(\delta)$ separately
with $\delta$-brackets. To this purpose we cover for fixed 
$i\in\{0,\ldots, \zeta(\delta)-1\}$ the stripe 
$[(0, a_{i+1}), (a_i, a_i)]$ recursively by the following procedure:

Put $\mathcal{S}_i(\delta) := \{ [\overline{a}_{i+1} ,
\overline{a}_{i}]\}$ and $x_1 := a_{i+1}(\delta)$.\\
If $x_{j}a_i(\delta) > 0$, then define 
$$
x_{j+1} := \max\{0, (x_j a_i(\delta) - \delta)/a_{i+1}(\delta) \}
$$
and put
$$
\mathcal{S}_i(\delta) := \mathcal{S}_i(\delta) \cup 
\{ [(x_{j+1}, a_{i+1}(\delta)), (x_j, a_i(\delta))] \}.
$$
If $x_j a_i(\delta) \le \delta$, then stop the covering procedure.

It is easy to see that for each $i$ the resulting set 
$\mathcal{S}_i(\delta)$
consists of $\delta$-brackets whose union is 
$[(0, a_{i+1}), (a_i, a_i)]$. In fact, we see that for $i=0$ the
$x_j$, $j=0,1,\ldots$, we get from the procedure above form
the projection of $\Gamma_\delta$ (defined as in (\ref{non_eq_grid})),
i.e., the set $\{x_0,x_1,\ldots, x_{\kappa(\delta)}, 
x_{\kappa(\delta)+1}\}$, where $x_{\kappa(\delta)} \le \delta$ and
$x_{\kappa(\delta)+1}=0$.
Thus $|\mathcal{S}_0(\delta)|=\kappa(\delta)+1$. Using the scaling
Lemma \ref{scaling} with $\lambda = (a_i^{-1}, a_i^{-1})$ we 
deduce that consequently 
$|\mathcal{S}_i(\delta)| = \kappa(\delta_i) + 1$, where 
$\delta_i := \delta/(1-i\delta)$.
(Observe that $a_{i+1}(\delta)/a_i(\delta)= a_1(\delta_i)$ for
$i<\zeta(\delta)$.)
By symmetry, we can cover $L^{(i)}(j)$ by
$2\kappa(\delta_i)+ 1$ $\delta$-brackets. (Observe that 
$L^{\zeta(\delta)}(\delta)$ is already a $\delta$-bracket.)
More precisely, using the mapping $\fs: \R^2\to \R^2$, 
$(x,y) \mapsto (y,x)$ we have
\begin{equation*}
\mathcal{Z}_\delta = \bigcup^{\zeta(\delta)}_{i=0}
\big( \mathcal{S}_i(\delta) \cup \fs(\mathcal{S}_i(\delta)) \big).
\end{equation*}
The set $\mathcal{Z}_\delta$ is a $\delta$-bracketing cover
of $[0,1]^2$ with 
\begin{equation}
\label{ZdeltaId}
|\mathcal{Z}_\delta| = \left( 
\sum^{\zeta(\delta)-1}_{i=0} (2\kappa(\delta_i) + 1)
\right) + 1.
\end{equation}
In Figure \ref{figg025} and \ref{figg005} we see $\mathcal{Z}_\delta$
for $\delta = 0.25$ and $0.05$.

\begin{figure}
\begin{center}
\hspace{1cm}{\epsfig{file=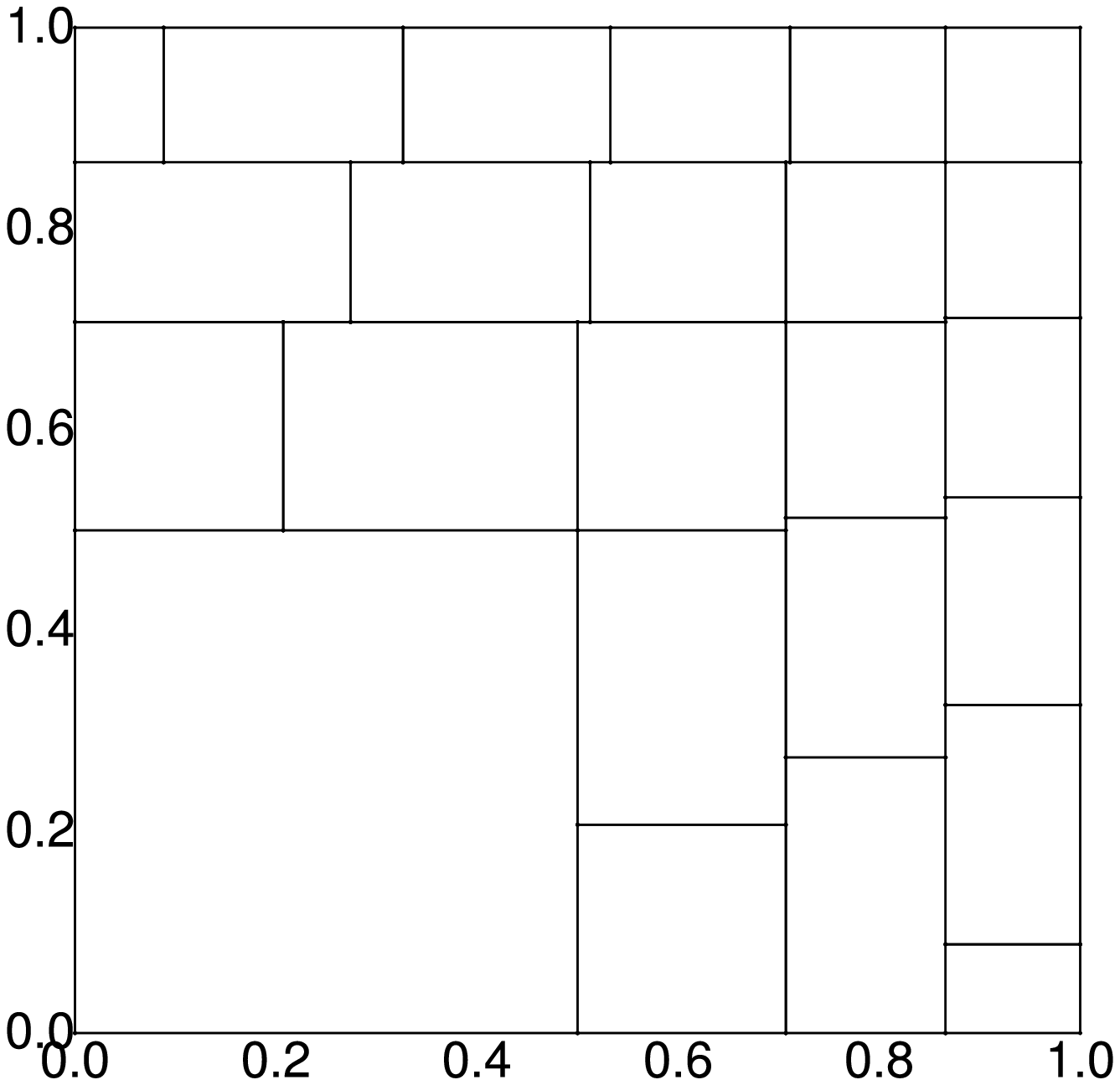,height
=.4\textheight
}}
\caption{\label{figg025}$\mathcal{Z}_\delta$ for
$\delta = 0.25$.}
\end{center}
\end{figure}  

\begin{figure}
\begin{center}
\hspace{1cm}{\epsfig{file=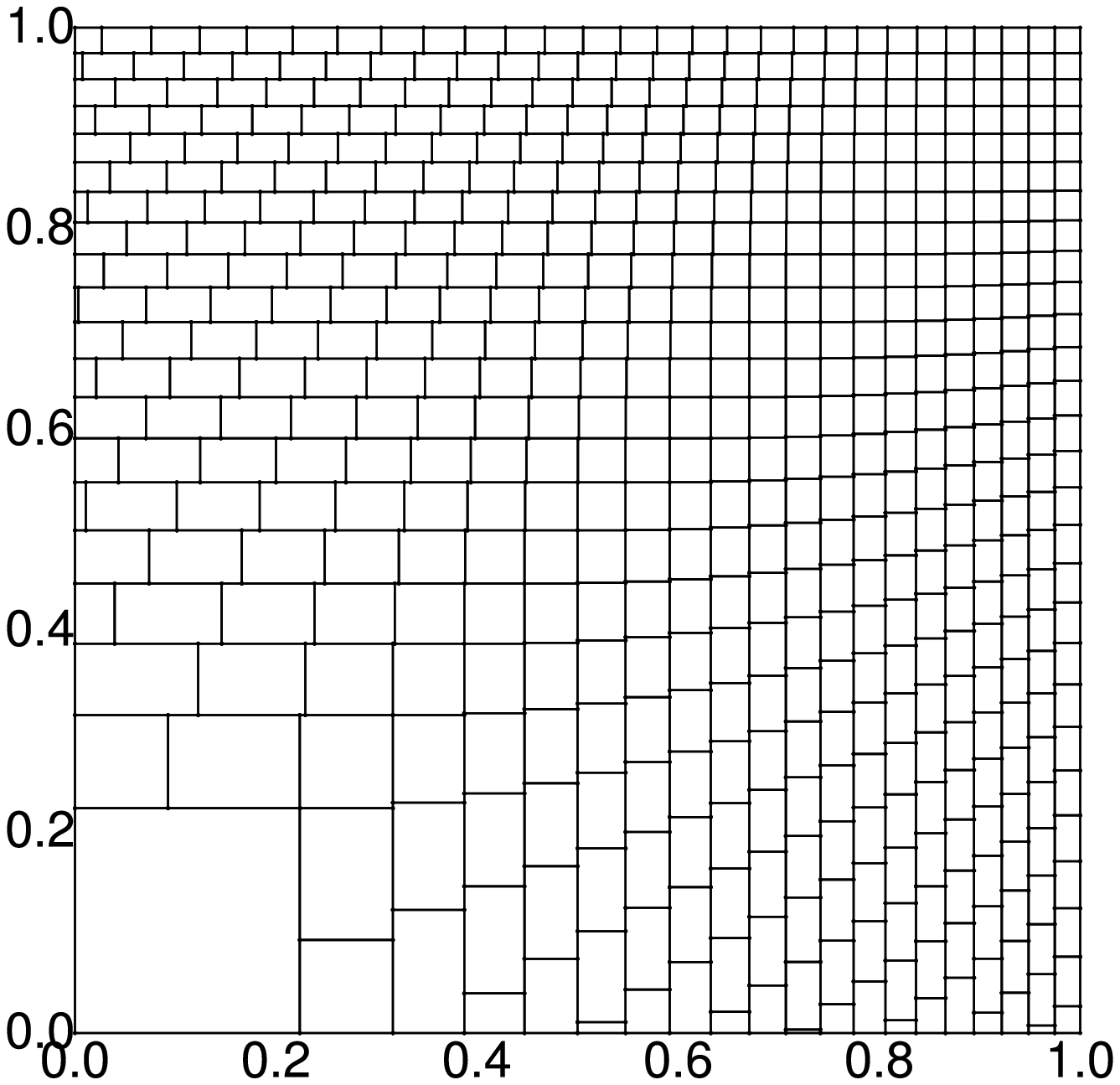,height
=.4\textheight
}}
\caption{\label{figg005}$\mathcal{Z}_\delta$ for
$\delta = 0.05$.}
\end{center}
\end{figure}  

From the identity (\ref{ZdeltaId}), the definition of $\zeta(\delta)$,
and (\ref{kappaO1}) we get
\begin{equation*}
|\mathcal{Z}_\delta| = 2 \left( \sum^{\zeta(\delta)-1}_{i=0}
2\ln(2)\delta^{-1}_i \right) + O(\delta^{-1}).
\end{equation*}
Now $\delta_i^{-1} = \delta^{-1}-i$, hence
$$
\sum^{\zeta(\delta)-1}_{i=0} \delta_i^{-1}
= \zeta(\delta) \delta^{-1} -
\frac{\zeta(\delta) (\zeta(\delta)-1)}{2}
=\frac12 \delta^{-2} + O(\delta^{-1}).
$$
Thus
\begin{equation*}
|\mathcal{Z}_\delta| = 
2 \ln(2) \delta^{-2} + O(\delta^{-1}).
\end{equation*}

Altogether, we proved the following proposition.

\begin{proposition}
For $\delta\in (0,1)$ 
the set of rectangles $\mathcal{Z}_\delta$ constructed
above is a $\delta$-bracketing cover of $\mathcal{C}_2$.
Its cardinality is given by
\begin{equation}
|\mathcal{Z}_\delta| =
\left( \sum^{\zeta(\delta)-1}_{i=0} (2\kappa(\delta_i)+1) 
\right) + 1 
= 2 \ln(2)\delta^{-2} + O(\delta^{-1}),
\end{equation}
where $\zeta(\delta) = \lceil \delta^{-1} \rceil - 1$, 
$\delta_i = \delta/(1-i\delta)$, and $\kappa(\delta_i)
= \kappa(\delta_i,2)$ as defined in (\ref{kappa}).
\end{proposition}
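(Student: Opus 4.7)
The plan is to verify the three claims of the proposition in turn: that $\mathcal{Z}_\delta$ is a $\delta$-bracketing cover of $\mathcal{C}_2$, that its cardinality admits the exact expression displayed, and that this expression is asymptotically $2\ln(2)\delta^{-2}+O(\delta^{-1})$. Much of the work is already embedded in the construction preceding the statement, so my task is mainly to assemble and justify those observations.

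First, for the cover property, I would check that the recursion $x_{j+1} = \max\{0,(x_j a_i - \delta)/a_{i+1}\}$ indeed produces brackets of weight at most $\delta$: the rectangle $[(x_{j+1},a_{i+1}),(x_j,a_i)]$ has weight $x_j a_i - x_{j+1}a_{i+1} \le \delta$ by construction of $x_{j+1}$. Thus $\mathcal{S}_i(\delta)$ covers the horizontal stripe $[(0,a_{i+1}),(a_i,a_i)]$, its reflection $\fs(\mathcal{S}_i(\delta))$ covers the transposed vertical stripe, and together they cover the L-shaped layer $L^{(i)}(\delta)$. The final layer $L^{(\zeta(\delta))}(\delta) = [0,\overline{a}_{\zeta(\delta)}]$ is itself a single $\delta$-bracket, since $V_{\overline{a}_{\zeta(\delta)}} = 1 - \zeta(\delta)\delta \le \delta$ by the definition of $\zeta(\delta)$.

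Next, for the exact cardinality, the key observation is that for $i=0$ the $x_j$ obtained from the recursion are precisely the coordinates of $\Gamma_\delta$, so $|\mathcal{S}_0(\delta)| = \kappa(\delta)+1$. For general $i$ I would apply Lemma \ref{scaling} with $\lambda = (a_i^{-1},a_i^{-1})$: the stripe rescales to $[(0,a_1(\delta_i)),(1,1)]$, and a $\delta$-bracket there corresponds to a $(\delta/a_i^2) = \delta_i$-bracket after rescaling, while $a_{i+1}/a_i = a_1(\delta_i)$. This reduces the count to $|\mathcal{S}_i(\delta)| = \kappa(\delta_i)+1$. Each layer $L^{(i)}(\delta)$ with $i<\zeta(\delta)$ is then covered by $2\kappa(\delta_i)+1$ brackets (the diagonal square $[\overline{a}_{i+1},\overline{a}_i]$ is shared between $\mathcal{S}_i(\delta)$ and its reflection), and adding the single bracket for the final layer yields the stated identity.

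Finally, for the asymptotic expansion I would substitute (\ref{kappaO1}) specialized to $d=2$, namely $\kappa(\delta_i) = 2\ln(2)\delta_i^{-1}+O(1)$, and use $\delta_i^{-1} = \delta^{-1}-i$ together with $\zeta(\delta)=\lceil\delta^{-1}\rceil-1$ to evaluate the arithmetic sum $\sum_{i=0}^{\zeta(\delta)-1}\delta_i^{-1} = \tfrac12\delta^{-2}+O(\delta^{-1})$. Multiplying by the factor $2$ from reflection gives $|\mathcal{Z}_\delta| = 2\ln(2)\delta^{-2}+O(\delta^{-1})$. The only delicate point is ensuring that the $O(1)$ errors in (\ref{kappaO1}) accumulate to at most $O(\delta^{-1})$ across the $O(\delta^{-1})$ summands; this requires the implied constant in (\ref{kappaO1}) to be uniform in $\delta_i$ as $i$ ranges over $\{0,\ldots,\zeta(\delta)-1\}$, which follows from the fact that $\delta_i$ stays bounded away from $1$ only via $\delta_i \le 1-1/\lceil\delta^{-1}\rceil$, together with the uniform constants in (\ref{ln1}) and (\ref{ln2}). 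Thus the error bookkeeping is routine rather than genuinely difficult.
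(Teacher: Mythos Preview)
Your proposal is correct and follows the paper's own argument essentially verbatim: the paper establishes the cover property, applies Lemma~\ref{scaling} with $\lambda=(a_i^{-1},a_i^{-1})$ to obtain $|\mathcal{S}_i(\delta)|=\kappa(\delta_i)+1$, and then combines (\ref{kappaO1}) with $\delta_i^{-1}=\delta^{-1}-i$ and $\sum_{i}\delta_i^{-1}=\tfrac12\delta^{-2}+O(\delta^{-1})$ exactly as you describe (in fact all of this appears in the text \emph{preceding} the proposition rather than after it). One minor inaccuracy worth flagging: your claimed bound $\delta_i\le 1-1/\lceil\delta^{-1}\rceil$ is false (for $\delta$ just below $1/n$ one has $\delta_{\zeta(\delta)-1}\to 1$), but the uniformity of the $O(1)$ in (\ref{kappaO1}) over all of $(0,1)$ that you actually need follows directly from the explicit formula (\ref{kappa}), so your conclusion stands.
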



\section{Re-Orientation of the Brackets}

A positive aspect of the two previous constructions is that 
(essentially) all brackets in the resulting $\delta$-bracketing 
covers have largest possible weight $\delta$ and overlap only 
on sets of 
Lebesgue measure zero. But if we look at the brackets in 
Thi\'emard's construction which 
have some distance to the upper edge of the unit rectangle
$[0,1]^2$,
then these boxes do 
certainly  not satisfy the ``maximum area criterion'' stated
in Lemma \ref{OptBox}. The same holds for the brackets in 
$Z_\delta$ which are close to the $x$- or the $y$-axis and 
away from the main diagonal.
The idea of our next construction is to generate 
a bracketing cover similarly as in the previous section, but
to ``re-orientate'' the brackets from time to time in the course
of the algorithm to
enlarge the area which is covered by a single bracket.
Of course the algorithm should still be simple and 
avoid to much overlap of the generated brackets. 

Before stating the technical details, we want to present the 
underlying geometrical idea in a simplified way:

Like the construction in the previous section,
our new bracketing cover should be symmetric with respect to both
coordinate axes. Thus we only have to state explicitly how to cover the 
subset
$$
H := \{ (x,y)\in [0,1]^2 \,|\, x\le y \}
$$
of $[0,1]^2$. For a certain number $p=p(\delta)$ we then subdivide
$H$ into sectors
\begin{equation*}
T^{(h)} := \left\{ (x,y)\in H\setminus\{(0,0)\} \,\Bigg|\,
\frac{h-1}{2^p} \le \frac{x}{y} \le \frac{h}{2^p} \right\}
\cup \{(0,0)\},\hspace{2ex}h=1,\ldots,2^p.
\end{equation*}
In the same manner as we decomposed in the previous construction
the set $H$ into stripes $[(0,a_{i+1}), (a_i,a_i)]$, we now 
decompose $T^{(2^p)}$ into
stripes $[(0,a_{i+1}), (a_i,a_i)]\cap T^{(2^p)}$.
We do it similarly with the sectors $T^{(1)}, \ldots, T^{(2^p-1)}$,
but we use thicker (and therefore less) stripes there.
Covering each of these stripes by brackets whose height is exactly 
the height of the corresponding stripe, we see that each bracket
has almost the maximum possible area. Provided we can avoid to 
much overlap at the boundaries of the sectors, we thus need only a 
very small number of these brackets to cover $[0,1]^2$.  

Let us now state the generating algorithm precisely.
We define ``discretized'' versions $T^{(h)}_{\dis}$ of the sectors
$T^{(h)}$, composed of stripes.
To this purpose we define for each $h\in \{1,\ldots,2^p\}$
\begin{equation*}
\rho^{(h)}(\delta) := \lceil h2^{-p}\delta^{-1} \rceil -1.
\end{equation*} 
(Note that $\rho^{(2^p)}(\delta)$ is precisely $\zeta(\delta)$
as defined in the previous section.)
For $i=0,\ldots,\rho^{(h)}(\delta)$ let
\begin{equation*}
T^{(h)}_i(\delta) :=
[(t^{(h)}_{i+1}(\delta), a_{i+1}^{(h)}(\delta)),
(h2^{-p} a_i^{(h)}(\delta), a^{(h)}_i(\delta))],
\end{equation*}
where
\begin{equation*}
t_{i+1}^{(h)}(\delta) :=
\frac{h-1}{2^p} \left( 1 - \left\lceil \frac{h-1}{h}i -\frac{1}{h}
\right\rceil \frac{2^p}{h-1} \delta \right)^{1/2}_+
\end{equation*}
and 
\begin{equation*}
a^{(h)}_i(\delta) := \left( 1 - i\frac{2^p}{h}\delta \right)^{1/2}_+.
\end{equation*}
(Here we use the convention to denote for a general function $f$
by $f_+$ the function $f 1_{f^{-1}([0,\infty))}$, where $1_A$ is the 
characteristic function of a set $A$. In particular we have
$t^{(1)}_{i+1}(\delta)=0$ for all $i$ and 
$a^{(h)}_{\rho^{(h)}(\delta)+1}(\delta) = 0$ for all $h$.)
We put 
$\overline{a}_i^{(h)}(\delta) :=
( h2^{-p} a^{(h)}_i(\delta), a^{(h)}_i(\delta) )$.
Then 
$$
T^{(h)}_{\dis} := \bigcup^{\rho^{(h)}(\delta)}_{i=0}
T^{(h)}_i(\delta)
$$ 
can be viewed as a discretized version
(discretized with respect to a decomposition into stripes) of 
$T^{(h)}$.

Now for $h= 2^p, 2^p-1,\ldots,1$ we cover each stripe
$T^{(h)}_i(\delta)$, $i=0,1,\ldots, \rho^{(h)}(\delta)$, 
of the ``discretized'' sectors $T^{(h)}_{\dis}$ by brackets having 
exactly the height of the stripe $T^{(h)}_i(\delta)$
in the following manner:

\vskip 0.2cm \noindent
{\bf Algorithm} {\sc RE-ORIENTED BRACKETS}
\vskip 0.2cm \noindent
{\it Input:} $\delta \in (0,1)$, $p\in [0,\infty)$.
\vskip 0.1cm \noindent
{\it Output:} A $\delta$-bracketing cover $\mathcal{R}_\delta$.
\vskip 0.2cm \noindent
{\it Main}
\vskip 0.2cm

$\mathcal{R}_\delta:=\emptyset$
\vskip 0.2cm

\hspace{3ex}For $h=2^p$ to $1$
\vskip 0.2cm

\hspace{6ex}For $i=0,...,\rho^{(h)}(\delta)$
\vskip 0.2cm

\begin{equation*}
\begin{split}
\hspace{9ex}x_1 &:= \left( \frac{h}{2^p} (a^{(h)}_i(\delta))^2 
- \delta \right)
\left( a^{(h)}_{i+1}(\delta) \right)^{-1}
= \frac{h}{2^{p}} a^{(h)}_{i+1}(\delta)\\
\hspace{9ex}\mathcal{R}_\delta &:= \mathcal{R}_\delta \cup
\{[\overline{a}^{(h)}_{i+1}(\delta), \overline{a}_i^{(h)}(\delta)]\}
\end{split}
\end{equation*}
\vskip 0.2cm

\hspace{9ex}For $j=1,2,\ldots$
\vskip 0.2cm

\hspace{12ex}If $x_j > t^{(h)}_{i+1}(\delta)$
\vskip 0.2cm

\begin{equation*}
\begin{split}
\hspace{15ex}&x_{j+1} := \left(x_j a^{(h)}_i(\delta) - \delta \right)_+
\left( a^{(h)}_{i+1}(\delta) \right)^{-1}\\
\hspace{15ex}&\mathcal{R}_\delta := \mathcal{R}_\delta \cup
\{[(x_{j+1}, a^{(h)}_{i+1}(\delta)), (x_j, a_i^{(h)}(\delta))]\}
\end{split}
\end{equation*}
\vskip 0.2cm

\hspace{12ex}Else next $i$
\vskip 0.2cm

$\mathcal{R}_\delta := 
\mathcal{R}_\delta \cup \fs(\mathcal{R}_\delta)$ 
\vskip.2cm\noindent

The output set $\mathcal{R}_\delta$ is visualized in 
Figure \ref{figr005} and \ref{figr002} for $\delta = 0.05$ and
$\delta=0.02$; there we have chosen $p=p(\delta)$ to be
\begin{equation}
\label{pdelta}
p(\delta) = \max \left\{ 
\left\lfloor \frac{\ln(\delta^{-1}) - k}{c} \right\rfloor 
\,,\, 0 \right\}
\end{equation}
with $k=0$ and $c=1.7$. With this choice we get for $\delta =0.25$ that
$p=0$ and consequently $\mathcal{Z}_\delta = \mathcal{R}_\delta$; 
thus Figure \ref{figg025} shows $\mathcal{R}_\delta$ 
for $\delta=0.25$.

\begin{figure}
\begin{center}
\hspace{1cm}{\epsfig{file=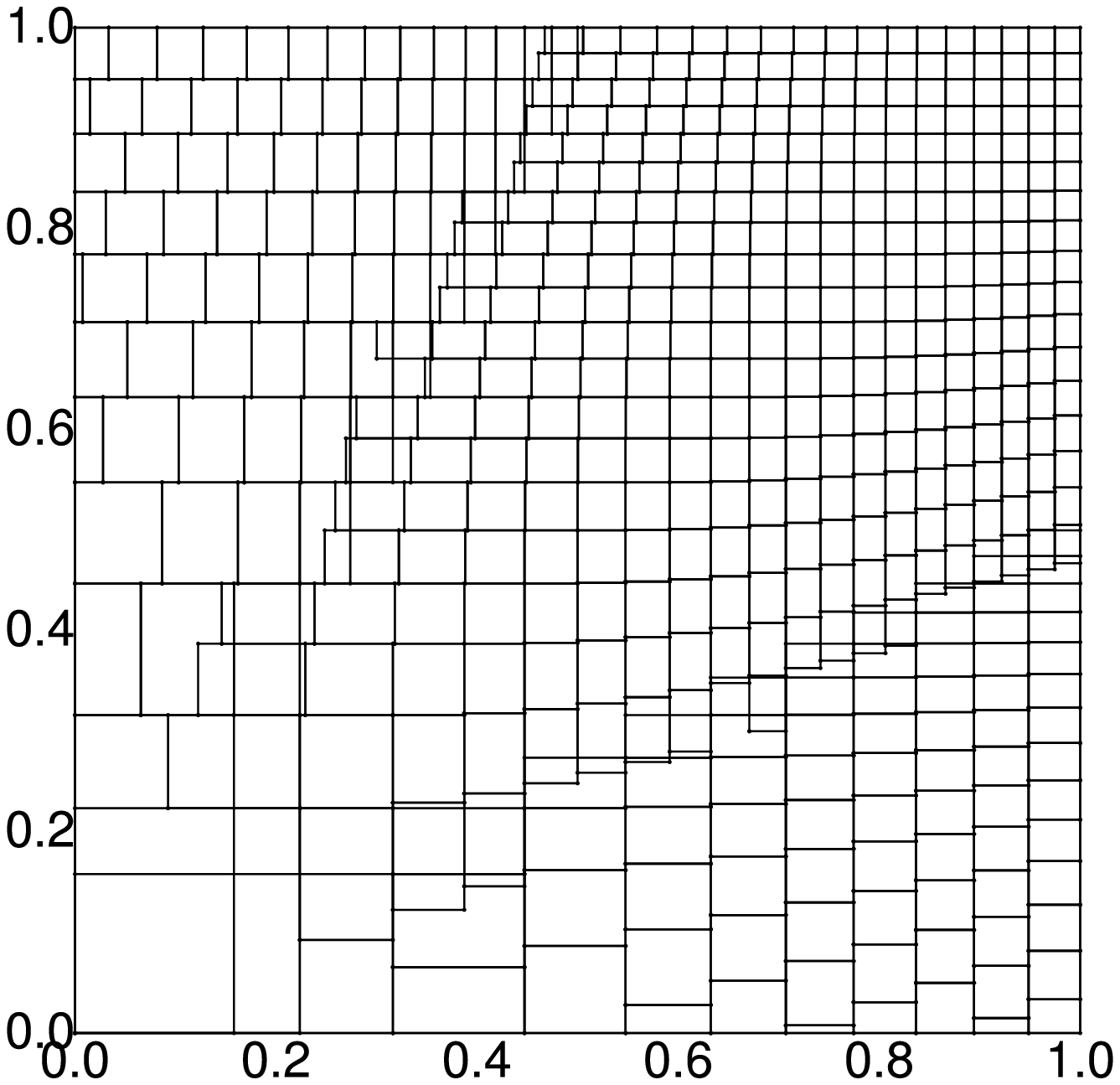,height=0.4\textheight
}}
\caption{\label{figr005}$\mathcal{R}_\delta$ for $\delta=0.05$.}
\end{center}
\end{figure}  

\begin{figure}
\begin{center}
\hspace{1cm}{\epsfig{file=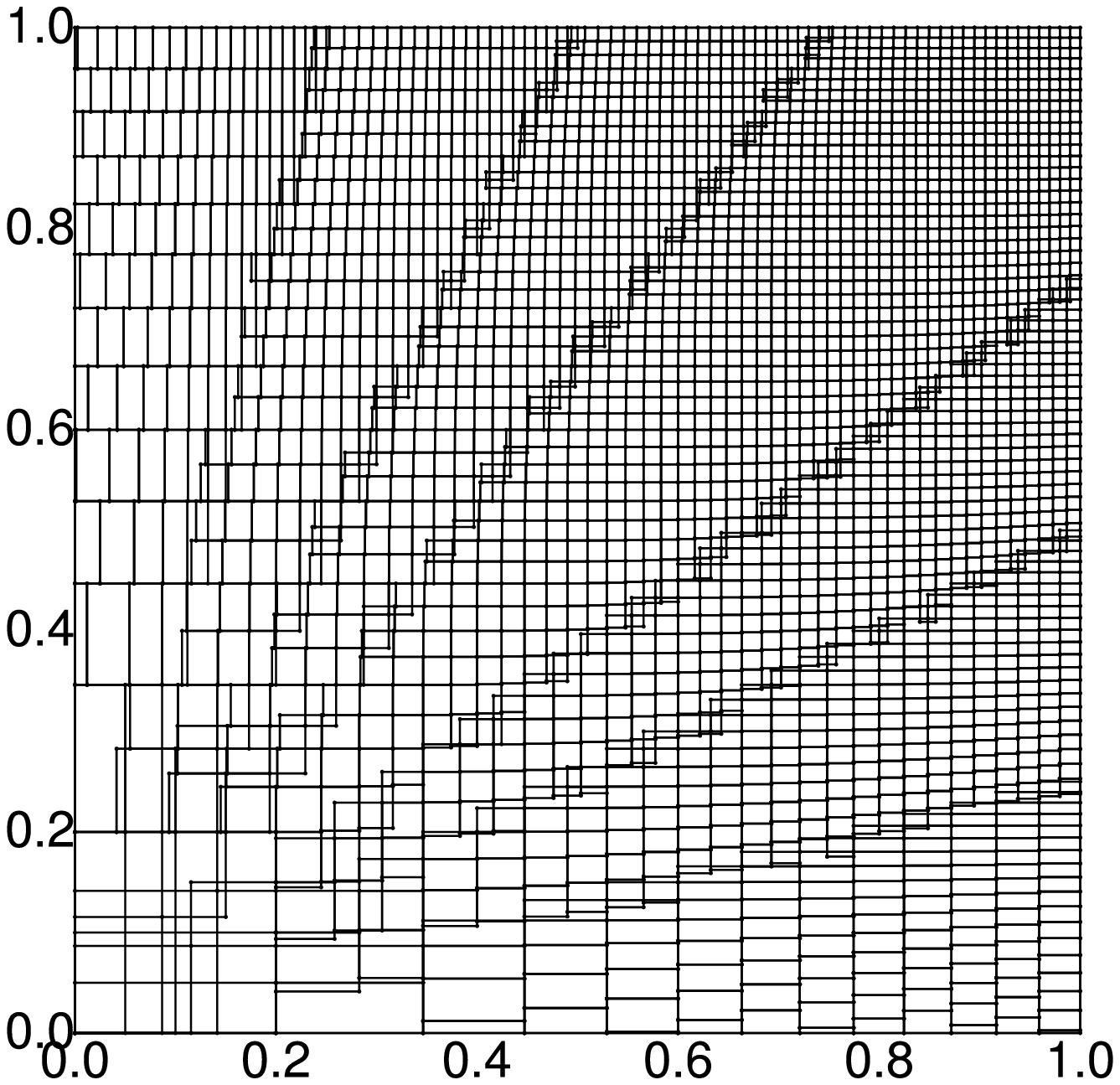,height=0.4\textheight
}}
\caption{\label{figr002}$\mathcal{R}_\delta$ for $\delta=0.02$.}
\end{center}
\end{figure}

Let us now prove the following proposition.

\begin{proposition}
\label{ThmReOr}
The output set $\mathcal{R}_\delta$ of the algorithm stated above is
a $\delta$-bracketing cover.
If $p=p(\delta)$ is a decreasing function on $(0,1)$ 
with $\lim_{\delta\to 0} p(\delta) = \infty$ and
$2^p = o(\delta^{-1})$ as $\delta$ tends to zero,
then the bracketing cover $\mathcal{R}_\delta$ satisfies
\begin{equation*}
|\mathcal{R}_\delta| = \delta^{-2} + o(\delta^{-2}).
\end{equation*}
\end{proposition}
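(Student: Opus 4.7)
My plan is to prove the proposition in two stages.

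First, to show $\mathcal{R}_\delta$ is a $\delta$-bracketing cover. I would verify that every bracket has weight at most $\delta$: the initial ``diagonal'' bracket $[\overline{a}^{(h)}_{i+1}, \overline{a}^{(h)}_i]$ in stripe $T^{(h)}_i$ has weight $\frac{h}{2^p}((a^{(h)}_i)^2 - (a^{(h)}_{i+1})^2) = \delta$ directly from the definition of $a^{(h)}_i$, and each subsequent bracket $[(x_{j+1}, a^{(h)}_{i+1}),(x_j, a^{(h)}_i)]$ has weight $x_j a^{(h)}_i - x_{j+1} a^{(h)}_{i+1} = \delta$ by the recursion, reducing to $\le \delta$ only when $(\cdot)_+$ truncates. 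For coverage, by the symmetrization $\mathcal{R}_\delta \cup \fs(\mathcal{R}_\delta)$ it suffices to cover $H = \{(x,y)\in[0,1]^2\colon x \le y\}$. Inside each stripe the inner loop produces brackets that tile $[x_{\mathrm{last}}, h2^{-p}a^{(h)}_i]\times[a^{(h)}_{i+1}, a^{(h)}_i]$ with $x_{\mathrm{last}} \le t^{(h)}_{i+1}$, and one then checks that, as $h$ descends from $2^p$ to $1$, the union of these rectangles (including the leftward overflow) contains $H$.

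The second stage is the cardinality bound. For each stripe I would apply Lemma~\ref{scaling} with $\lambda = (2^p/(h a^{(h)}_i),\,1/a^{(h)}_i)$, mapping $T^{(h)}_i$ onto the canonical stripe $[(t'_{h,i}, a_1(\delta_{h,i})),(1,1)]$ with $\delta_{h,i} = 2^p\delta/(h(a^{(h)}_i)^2)$ and $t'_{h,i} = 2^p t^{(h)}_{i+1}/(h a^{(h)}_i)$. A short check shows the algorithm's recursion is the scaled version of the recursion defining $\omega$, so the number of brackets in $T^{(h)}_i$ equals $\omega(\delta_{h,i}, t'_{h,i})$. From (\ref{omegadeltat}) together with (\ref{ln1}) and (\ref{ln2}) one derives the uniform expansion $\omega(\delta, t) = \frac{2\ln(2-t)}{\delta} + O(1)$. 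Using the elementary identity $\alpha = \lceil((h-1)i - 1)/h\rceil = i(h-1)/h + O(1)$, the two factors in $(t'_{h,i})^2 = ((h-1)/h)^2 \cdot (1 - \alpha \cdot 2^p\delta/(h-1))/(1 - i\cdot 2^p\delta/h)$ cancel to leading order, yielding $t'_{h,i} = (h-1)/h + O(2^p\delta)$ in the non-truncated range; hence $T^{(h)}_i$ contributes $\frac{2\ln((h+1)/h)}{\delta_{h,i}} + O(h)$ brackets. Writing $1/\delta_{h,i} = h(1 - i\cdot 2^p\delta/h)/(2^p\delta)$, the inner sum over $i = 0, \ldots, \rho^{(h)}-1$ is a discrete Riemann sum evaluating to $\frac{h^2}{2\cdot 2^{2p}\delta^2} + O(h/(2^p\delta))$, and the outer sum over $h$ uses $h^2\ln(1+1/h) = h - \tfrac{1}{2} + O(1/h)$ to give $\sum_{h=1}^{2^p} h^2\ln((h+1)/h) = \frac{2^{2p}}{2} + O(p)$. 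Combining yields $\frac{1}{2\delta^2} + o(\delta^{-2})$ brackets in $H$; doubling by $\fs$, with an $O(\delta^{-1})$ correction for brackets fixed by the reflection, gives $|\mathcal{R}_\delta| = \delta^{-2} + o(\delta^{-2})$, as claimed.

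The main obstacle will be the uniform error control. Four regimes require separate bookkeeping: (i) the $O(1)$ per-stripe error from $\omega$ totals $O(2^p/\delta)$, which is $o(\delta^{-2})$ by the hypothesis $2^p = o(\delta^{-1})$; (ii) the truncated stripes, where $t^{(h)}_{i+1}$ is pushed to $0$ by $(\cdot)_+$ and so $t'_{h,i} = 0$ rather than $(h-1)/h$, number $O(1/(2^p\delta))$ per sector and contribute $O(2^{-p}\delta^{-2})$ in total, which is $o(\delta^{-2})$ because $p\to\infty$; (iii) the rightmost sector $h = 2^p$ contributes $O(\ln(1 + 2^{-p})\delta^{-2}) = O(2^{-p}\delta^{-2})$; and (iv) the boundary stripes where $\delta_{h,i}$ is not small contribute only $O(1)$ brackets each. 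Beyond these, the geometric coverage verification in the first stage is subtle because the horizontal stripe heights $a^{(h)}_i$ do not align across different $h$, so care is needed to show no gaps appear at the sector boundaries; the weight computation and the Riemann-sum asymptotics themselves are otherwise routine.
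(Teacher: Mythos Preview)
Your overall approach is essentially the paper's: the same scaling with $\lambda=(2^p/(h a^{(h)}_i),\,1/a^{(h)}_i)$, the same reduction to $\omega(\delta^{(h)}_i,t)$ via (\ref{omegadeltat}), and the same endgame $\sum_{h=1}^{2^p} h^2\ln(1+1/h)\sim 2^{2p}/2$. The coverage and weight arguments you sketch also match the paper's.

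There is, however, a genuine gap in your error control. You pass through the intermediate formula $\omega(\delta,t)=2\ln(2-t)/\delta+O(1)$ and then substitute $t'_{h,i}=(h-1)/h+O(2^p\delta)$. This yields a per-stripe error of $O(2^p\delta)/\delta_{h,i}=O(h)$, and summing $O(h)$ over the $\rho^{(h)}(\delta)\asymp h/(2^p\delta)$ stripes of sector $h$ and then over $h=1,\dots,2^p$ gives a total error of order $\sum_h h^2/(2^p\delta)\asymp 2^{2p}/\delta$. For this to be $o(\delta^{-2})$ you would need $2^{2p}=o(\delta^{-1})$, but the hypothesis of the proposition only gives $2^p=o(\delta^{-1})$; thus your chain does not close under the stated assumptions. (Your list of ``four regimes'' accounts for the $O(1)$ term from $\omega$ and for the truncated and boundary stripes, but nowhere absorbs this accumulated $O(h)$ contribution.) Moreover, the bound $t'_{h,i}=(h-1)/h+O(2^p\delta)$ is not even uniformly valid for $i$ close to $\rho^{(h)}(\delta)$, since the denominator $1-i\cdot 2^p\delta/h$ in your expression for $(t'_{h,i})^2$ becomes small there.

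The fix is to sharpen the estimate on $t'_{h,i}$: since the ``ceiling offset'' $\lceil \tfrac{h-1}{h}i-\tfrac{1}{h}\rceil\cdot\tfrac{h}{h-1}-i$ lies in a bounded interval independent of $h$ and $i$, one gets directly $t'_{h,i}=(h-1)/h+O(\delta_{h,i})$. Substituting this (rather than $O(2^p\delta)$) gives $\ln(2-t'_{h,i})=\ln(1+1/h)+O(\delta_{h,i})$ and hence a uniform $O(1)$ error per stripe; the total error is then $O(2^p/\delta)=o(\delta^{-2})$, exactly as in the paper's direct computation of $\omega(\delta,h,i)=2\ln(1+1/h)(\delta^{(h)}_i)^{-1}+O(1)$.
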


\begin{proof}
One can check by direct calculation that all rectangles that
are added to $\mathcal{R}_\delta$ are in fact $\delta$-brackets.
The points $\overline{a}^{(h)}_i(\delta)$ are lying on the 
lines $\tfrac{x}{y} \equiv \tfrac{h}{2^p}$ and 
the $x$-coordinates $t^{(h)}_{i+1}(\delta)$ of the left corners
of the stripes $T^{(h)}_i(\delta)$ are chosen in such a way that
$H\subseteq \cup^{2^p}_{h=1} T^{(h)}_{\dis}$: For given
$h\ge 2$ and a given $i$ the index 
$$
j_i := \left\lceil \frac{h-1}{h} i - \frac{1}{h} \right\rceil
$$
is uniquely determined by
$$
a^{(h-1)}_{j_i}(\delta) > a^{(h)}_{i+1}(\delta) 
\ge a^{(h-1)}_{j_i + 1}(\delta),
$$
and we have
$$
t^{(h)}_{i+1}(\delta) = \frac{h-1}{2^p} a^{(h-1)}_{j_i}(\delta).
$$ 

Let now $\omega(\delta,h,i)$ be the minimal number of 
$\delta$-brackets of heights 
$a^{(h)}_i(\delta) - a^{(h)}_{i+1}(\delta)$ that we need to
cover $T^{(h)}_i(\delta)$. Using the 
scaling Lemma \ref{scaling} with 
$\lambda = (2^p(h a_{i}(\delta))^{-1},(a_{i}(\delta))^{-1})$ 
we see that
we have $\omega(\delta,h,i) = \omega(\delta^{(h)}_i, t(\delta,h,i))$
as defined in Section \ref{PRE}, where
$\delta^{(h)} = \tfrac{2^p}{h}\delta$ and, coinciding with the
convention from the previous section, 
$\delta^{(h)}_i = \delta^{(h)}/(1-i\delta^{(h)})$, and
\begin{equation*}
t(\delta,h,i) =
\frac{h-1}{h} \left( 1 - \left( 
\left\lceil \frac{h-1}{h} i - \frac{1}{h} \right\rceil 
\frac{h}{h-1} - i \right) \delta^{(h)}_i \right)^{1/2}_+.
\end{equation*}
Due to (\ref{omegadeltat}) we get
\begin{equation*}
\begin{split}
&\omega(\delta,h,i) = \\
&\left\lceil 2\,
\frac{\ln \big( 1-(1-\delta^{(h)}_i)^{1/2} \big) - 
\ln \big( t(\delta,h,i)(1-(1-\delta^{(h)}_i)^{-1/2}) + 
\delta^{(h)}_i (1-\delta^{(h)}_i)^{-1/2} \big)}
{\ln(1-\delta^{(h)}_i)} \right\rceil.
\end{split}
\end{equation*}
We claim that 
\begin{equation}
\label{omegaO1}
\omega(\delta,h,i) = 2\ln \left( 1+\frac{1}{h} \right)
\left( \delta^{(h)}_i \right)^{-1} + O(1)
\hspace{2ex}\text{as $\delta^{(h)}_i$ tends to zero.}
\end{equation}
According to (\ref{kappaO1}) this is true for $h=1$. In general it
follows from the inequalities (\ref{ln1}), (\ref{ln2}) and
\begin{equation*}
\ln \big( t(\delta,h,i)(1-(1-\delta^{(h)}_i)^{-1/2}) + 
\delta^{(h)}_i (1-\delta^{(h)}_i)^{-1/2} \big)
= \ln \left( 1+\frac{1}{h} \right) - \ln(2) + \ln(\delta^{(h)}_i)
+ O(\delta^{(h)}_i).
\end{equation*} 
We have
\begin{equation}
\label{cardR}
|\mathcal{R}_\delta| = 
\sum^{2^p}_{h=1} \left( \sum^{\rho^{(h)}(\delta)}_{i=0}
 2\omega(\delta,h,i) \right) - (\rho^{(2^p)}(\delta)+1);
\end{equation}
here we have to subtract the last term to avoid
double-counting of the $\delta$-brackets on the main diagonal of
$[0,1]^2$. According to (\ref{omegaO1}) we get
\begin{equation*}
\begin{split}
|\mathcal{R}_\delta| &= 
4\,\sum^{2^p}_{h=1} \sum^{\rho^{(h)}(\delta)}_{i=0}
\ln \left( 1 + \frac{1}{h} \right) \left( \frac{h}{2^p} 
\delta^{-1} - i \right) + o(\delta^{-2})\\
&= 4\,\sum^{2^p}_{h=1} 
\ln \left( 1 + \frac{1}{h} \right) 
\left( \frac{1}{2} \left( \frac{h}{2^p} \delta^{-1} \right)^2
+ O(\delta^{-1}) \right) + o(\delta^{-2})\\
&= 2 \left( \sum^{2^p}_{h=1} \ln \left( 1 + \frac{1}{h} \right)  
\left( \frac{h}{2^p} \right)^2 \right) \delta^{-2} + o(\delta^{-2}).
\end{split}
\end{equation*}
It remains to show that the sum in parentheses is of the form
$\tfrac12 + o(1)$ as $\delta$ tends to zero (and thus $p$ tends
to infinity). But this follows easily from the identity
\begin{equation*}
\ln \left( 1 + \frac{1}{h} \right) = h^{-1} - h^{-2}
\sum^{\infty}_{k=0}(-1)^{k}\frac{h^{-k}}{k+2}.
\end{equation*} 
\end{proof}


\section{Numerical Comparison and Conclusion}

Let us now compare the cardinalities of the different 
constructions of $\delta$-bracketing covers for some values of 
$\delta$, see the table below. For the construction of 
$\mathcal{R}_\delta$ we have chosen $p=p(\delta)$ exactly as in
(\ref{pdelta}). Thus
$$
2^p \approx \delta^{-\frac{\ln(2)}{c}} \approx \delta^{-0.4}
\approx o(\delta^{-1}),
$$
and the conditions of Proposition \ref{ThmReOr} are clearly
satisfied. Note that $2\ln(2) = 1.386294...$ and 
$(2\ln(2))^2 = 1.921812...$. Thus the table underlines the 
dominance of the leading terms in the expansion of the cardinalities
of the $\delta$-bracketing covers with respect to $\delta^{-1}$.

\vspace{0.1cm}

\begin{center}
\begin{tabular}{|r|l|l|l|l|l|l|l|l|}\hline
$\delta$&0.25 & 0.1 & 0.05 & 0.01& 0.005& 0.001& 0.0005& 0.0001\\ 
\hline\hline
$|\mathcal{G}_\delta|$ & 36&196 &784& 19321 & 77284& 1923769& 7689529& 
192182769 \\ \hline
$|\mathcal{T}_\delta|$ & 25&142 &565& 13922 & 55575& 1386908& 5546403& 138635574 \\ \hline
$|\mathcal{Z}_\delta|$ & 24&146 &572& 13962 & 55650& 1387292& 5547174& 138639434 \\ \hline
$|\mathcal{R}_\delta|$ & 24&128 & 490& 10888& 42162& 1021122& 4055986& 100514774\\ \hline 
$\delta^{-2}$& 16&100&400&10000&40000&1000000&4000000&100000000\\ \hline
\end{tabular}
\end{center}

Altogether, we provided in this paper an explicit 
construction of a $\delta$-bracketing cover $\mathcal{R}_\delta$
of $\mathcal{C}_2$ which is optimal in the sense that the coefficient
in front of the most significant term $\delta^{-2}$ in the
expansion of $|\mathcal{R}_\delta|$ with respect to 
$\delta^{-1}$ is optimal. 

We compared $\mathcal{R}_\delta$ to its simplified version
$\mathcal{Z}_\delta$ (which does not ``re-orientate'' the
brackets) and known bracketing covers from \cite{DGS} and \cite{Thi}.

We conjecture that extending the idea of construction of 
$\mathcal{R}_\delta$ to arbitrary dimension $d$, one can
generate $\delta$-bracketing covers $\mathcal{R}^{(d)}_\delta$
of $\dcubes$ whose cardinality satisfies
$$
|\mathcal{R}^{(d)}_\delta| = \delta^{-d} + o_d(\delta^{-d})
\hspace{2ex}\text{as $\delta$ approaches zero}
$$
(here $o_d$ should emphasize that the implicit constants in the
$o$-notation may depend on $d$), i.e., has the best possible 
coefficient in front of the most significant term $\delta^{-d}$
in the expansion with respect to $\delta^{-1}$.

We suspect that a rigorous proof of the conjecture might be
rather technical and tedious. That is why we would find even a rigorous
analysis for $d=3$ or computational experiments for higher
dimension quite interesting.

\subsection*{Acknowledgment}
I would like to thank Torben Rabe for performing the numerical 
tests and providing the figures for this paper.

\end{document}